\documentclass[a4paper,10pt]{article}
\usepackage[utf8]{inputenc}

\usepackage{cmap}
\usepackage[T1]{fontenc}

\usepackage{tikz}
\usetikzlibrary{arrows.meta}

\usepackage{subcaption}

\usepackage{tocloft}
\usepackage[english]{babel}
\usepackage{amsfonts,stmaryrd}
\usepackage{amssymb,amsthm}
\usepackage{xcolor,graphicx}
\usepackage{marginnote}
\usepackage{amsmath}
\usepackage{a4wide}
\usepackage[affil-it]{authblk}

\usepackage{epstopdf}
\newcommand{\doi}[1]{\href{http://dx.doi.org/#1}{doi:#1}}

\usepackage{titlesec}
\titlelabel{\thetitle.\quad}

\usepackage[labelsep=period]{caption}

\usepackage{enumitem}

\usepackage{crossreftools}
\makeatletter
\newcommand{\optionaldesc}[2]{%
	\phantomsection
	#1\protected@edef\@currentlabel{#1}\label{#2}%
}
\makeatother

\numberwithin{equation}{section}

\let\OLDthebibliography\thebibliography
\renewcommand\thebibliography[1]{
	\OLDthebibliography{#1}
	\setlength{\parskip}{1pt}
	\setlength{\itemsep}{1pt plus 0.3ex}
}

\usepackage[colorlinks=true,linktocpage,pdfpagelabels,
bookmarksnumbered,bookmarksopen]{hyperref}
\definecolor{ForestGreen}{rgb}{0.1,0.6,0.05}
\definecolor{EgyptBlue}{rgb}{0.063,0.1,0.6}
\hypersetup{
	colorlinks=true,
	linkcolor=EgyptBlue, 
	citecolor=ForestGreen,
	urlcolor=olive
}

\usepackage[hyperpageref]{backref}

\allowdisplaybreaks
\usepackage{mathtools}
\mathtoolsset{showonlyrefs}

\usepackage{orcidlink}

\usepackage{accents}

\def\Wo{W_0^{1,p}(\Omega)}

\newcommand{\cS}{\mathcal{S}}
\newcommand{\cN}{\mathcal N}

\newcommand{\R}{\mathbb R}

\newtheorem{theorem}{Theorem}[section]
\newtheorem{lemma}[theorem]{Lemma}
\newtheorem{proposition}[theorem]{Proposition}
\newtheorem{corollary}[theorem]{Corollary}
\theoremstyle{definition}
\newtheorem{remark}[theorem]{Remark}

\title{\vspace*{-10ex}
On Pleijel-type nodal domain bounds for the $p$-Laplacian
}

\author{Vladimir Bobkov\\}
\date{}

\AtEndDocument{%
	\par
	\medskip
	\bigskip
	\begin{tabular}{@{}l@{}}%
		(V.~Bobkov)\\[0.2em]
		\textsc{Institute of Mathematics, Ufa Federal Research Centre, RAS}\\
		\textsc{Chernyshevsky str. 112, 450008 Ufa, Russia}\\[0.3em]
		\orcidlink{0000-0002-4425-0218} 0000-0002-4425-0218\\[0.3em]
		\textit{E-mail address}: \texttt{bobkov@matem.anrb.ru, bobkovve@gmail.com}\\[1.5em]
\end{tabular}}

\begin{document}
	\maketitle
	\vspace*{-5ex}
	\begin{abstract}
		
		We provide an upper estimate \`a~la Pleijel on the asymptotic number of nodal domains for eigenfunctions corresponding to the cogenus eigenvalues $\{\lambda_k(p;\Omega)\}$ of the $p$-Laplacian in a bounded domain $\Omega$, and identify regimes when the number of nodal domains of the $k$-th eigenfunction is less than $k$ as $k \to +\infty$. 
		As auxiliary results, which also have independent interest, we provide a useful characterization of the cogenus eigenvalues implying their continuity with respect to $p$, justify the Weyl law, and prove the inequality $\lambda_2(p;B) \leq \dots \leq \lambda_{N+1}(p;B) \leq \lambda_\ominus(p)$ in an $N$-dimensional ball $B$, where $\lambda_\ominus(p)$ is an eigenvalue whose eigenfunction 
		has a central section of $B$ as its nodal set.

		\par
		\smallskip
		\noindent {\bf  Keywords}: 
		$p$-Laplacian;
		nodal domains;
		Pleijel theorem;
		Weyl law;
		cogenus;
		variational eigenvalues.
		
		\noindent {\bf MSC2020}: 
		35J92,	%Quasilinear elliptic equations with $p$-Laplacian
		35P30,  %Nonlinear eigenvalue problems, nonlinear spectral theory
		47J10.  %Nonlinear spectral theory, nonlinear eigenvalue problems
	\end{abstract}

	\begin{quote}	
		\tableofcontents	
		\addtocontents{toc}{\vspace*{-2ex}}
	\end{quote}

\section{Introduction}\label{sec:intro}

Let $\Omega$ be a bounded Lipschitz domain in $\mathbb{R}^N$, $N \geq 2$, and let $p \in (1,+\infty)$. 
Consider the eigenvalue problem
	\begin{equation}\label{eq:D}
		\tag{$\mathcal{D}$}
		\left\{
		\begin{aligned}
			-\Delta_p u &= \lambda |u|^{p-2} u 
			&&\text{in } \Omega, \\
			u &= 0  &&\text{on } \partial \Omega,
		\end{aligned}
		\right.
	\end{equation}
where $\Delta_p u = \text{div}(|\nabla u|^{p-2}\nabla u)$ is the $p$-Laplacian. 
This problem is understood in the weak sense, that is, $\lambda>0$ and $u \in \Wo \setminus \{0\}$ are called an eigenvalue and an eigenfunction of \eqref{eq:D}, provided
\begin{equation}\label{eq:D:weak}
\int_\Omega |\nabla u|^{p-2} \nabla u \nabla \varphi \,dx
= 
\lambda \int_\Omega |u|^{p-2} u \varphi \,dx
\quad \text{for any}~ \varphi \in \Wo.
\end{equation} 
By the classical bootstrap argument, any eigenfunction is bounded, and hence, by the interior regularity results (see, e.g., \cite{tolksdorf}) it belongs to $C^1(\Omega)$. 

In what follows, we frequently use the notation
\begin{equation}\label{eq:S}
\mathcal{S}(\Omega)
=
\left\{u\in W^{1,p}_0(\Omega):~\int_\Omega |u|^p\,dx=1\right\}
\quad \text{and} \quad 
E(u)=\int_\Omega |\nabla u|^p\,dx,
\end{equation}
so that eigenvalues are critical levels of the functional $E \in C^1(\Wo,\mathbb{R})$ over $\mathcal{S}(\Omega)$, and eigenfunctions are corresponding critical points.

\subsection{Eigenvalues and cogenus}\label{subsec:cogenus}
Using the Ljusternik--Schnirelman procedure, one can obtain an infinite sequence of eigenvalues of \eqref{eq:D}. 
One of the classical ways is to introduce the notion of \textit{index}. 
Let $\mathbb{N} =\{1,2,\dots\}$. 
We say that an abstract set $A$ is symmetric whenever $x \in A$ implies $-x \in A$.  
We define an \textit{index} as a set function $i$ satisfying the following assumptions:
\begin{enumerate}[label={$(I_{\arabic*})$}]
    \item\label{I1}
    $i(\emptyset)=0$, and 
    $i(A) \in \mathbb{N}$ for any nonempty, symmetric, compact set $A$ of a topological vector space such that $0 \not\in A$. 
    
    \item\label{I2}
    If $X,Y$ are topological vector spaces, 
    $A \subset X \setminus \{0\}$ is nonempty, symmetric, compact, and $\pi: A \to Y \setminus \{0\}$ is continuous and odd, then
    $$
    i(\pi(A)) \geq i(A).
    $$
\end{enumerate}
Taking $X, Y = W_0^{1,p}(\Omega)$ and $i$ satisfying \ref{I1}, \ref{I2}, we define
\begin{equation}\label{eq:eigen-general} 
	\lambda_k^{(i)}(p;\Omega) 
	= 
	\inf_{A \in \Sigma_k^{(i)}} \max_{u \in A} E(u), \quad k \in \mathbb{N},
\end{equation}
where
\begin{equation}\label{eq:sigmai}
	\Sigma_k^{(i)}
	= 
	\left\{A \subset \mathcal{S}(\Omega):~  A \mbox{ is compact, symmetric, and } i(A)\geq k\right\}.
\end{equation}
For definiteness, we set $\lambda_k^{(i)}(p;\Omega) = +\infty$ if $\Sigma_k^{(i)} = \emptyset$ for some $k \in \mathbb{N}$. 

Since the functional $E$ satisfies the Palais--Smale condition on $\mathcal{S}(\Omega)$ (see, e.g., \cite[Lemma~4]{DR0}), one can use the deformation lemma (see, e.g., \cite[Theorem~4]{DR0}) and the assumptions \ref{I1}, \ref{I2} to obtain the following statement, cf.\ \cite[Theorem~5]{DR0}.
\begin{proposition}\label{prop:eigenvalues}
Let \ref{I1}, \ref{I2} be satisfied. 
If $\Sigma_k^{(i)} \neq \emptyset$ for some $k \in \mathbb{N}$, then $\lambda_k^{(i)}(p;\Omega)$ is an eigenvalue of \eqref{eq:D}. 
\end{proposition}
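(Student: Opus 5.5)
The plan is the standard minimax argument by contradiction, using the Palais--Smale condition and the deformation lemma from \cite{DR0} together with \ref{I1} and \ref{I2}.

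\emph{Finiteness and positivity.} Set $c=\lambda_k^{(i)}(p;\Omega)$. Since $\Sigma_k^{(i)}\neq\emptyset$, pick $A\in\Sigma_k^{(i)}$. By compactness of $A$ and continuity of $E$, $\max_A E<\infty$, so $c<+\infty$. By the Poincar\'e inequality, $E(u)\geq\lambda_1>0$ on $\mathcal{S}(\Omega)$, so $c>0$. Hence $c$ is a finite positive level, which is consistent with the requirement $\lambda>0$ in the definition of an eigenvalue.

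\emph{Contradiction hypothesis and deformation.} Suppose $c$ is not a critical value of $E|_{\mathcal{S}(\Omega)}$, that is, there is no $u\in\mathcal{S}(\Omega)$ with $E(u)=c$ satisfying \eqref{eq:D:weak} with $\lambda=c$. Note that constrained critical points of $E$ on $\mathcal{S}(\Omega)$ at level $c$ are exactly eigenfunctions with eigenvalue $c$: the Lagrange multiplier equals $c$, as seen by testing with $\varphi=u$. Since $E$ is even and satisfies the Palais--Smale condition on $\mathcal{S}(\Omega)$ \cite[Lemma~4]{DR0}, the deformation lemma \cite[Theorem~4]{DR0} provides $\varepsilon>0$ and a continuous map $\eta:\mathcal{S}(\Omega)\to\mathcal{S}(\Omega)$ with two properties. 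First, $\eta$ is odd, because $E$ and $\mathcal{S}(\Omega)$ are symmetric, so the pseudo-gradient flow can be chosen equivariant. Second,
\[
\eta(\{E\leq c+\varepsilon\})\subset\{E\leq c-\varepsilon\}.
\]

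\emph{Reaching the contradiction.} By the definition of the infimum, choose $A\in\Sigma_k^{(i)}$ with $\max_A E\leq c+\varepsilon$, and put $B=\eta(A)$. Then $B$ is compact as a continuous image of a compact set, symmetric since $\eta$ is odd, and contained in $\mathcal{S}(\Omega)$, so $0\notin B$. Applying \ref{I2} with $X=Y=\Wo$ and $\pi=\eta|_A$, which is continuous, odd, and takes values in $\Wo\setminus\{0\}$, gives $i(B)\geq i(A)\geq k$. Therefore $B\in\Sigma_k^{(i)}$, while $\max_B E\leq c-\varepsilon<c$. This contradicts the definition of $c$ as the infimum. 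Hence $c$ is a critical level, and any corresponding critical point is an eigenfunction with eigenvalue $c$.

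The main obstacle is the deformation step. One needs an \emph{odd} deformation that stays on the manifold $\mathcal{S}(\Omega)$, which is only $C^1$ and lives in a non-Hilbert Banach space $\Wo$, so a symmetric pseudo-gradient vector field is required. This step is exactly what \cite[Theorem~4]{DR0} supplies, so I would invoke it directly. Only the oddness and continuity of $\eta$ need checking, and they are what let \ref{I2} apply.
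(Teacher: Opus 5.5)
Your proposal is correct and follows the route the paper indicates. The paper gives no detailed proof; it only cites the Palais--Smale condition \cite[Lemma~4]{DR0}, the odd deformation lemma \cite[Theorem~4]{DR0}, and \ref{I1}, \ref{I2}, cf.\ \cite[Theorem~5]{DR0}. Your argument is exactly this: deform an almost optimal admissible set below level $c$, then use \ref{I2} to show it stays in $\Sigma_k^{(i)}$. One detail you leave implicit: \ref{I2} needs $A$ to be nonempty, and this follows from \ref{I1} because $i(\emptyset)=0<k$.
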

Although the assumptions \ref{I1}, \ref{I2} are minimal for Proposition~\ref{prop:eigenvalues} to hold, they alone hardly help to produce any reasonable information on the spectrum of the $p$-Laplacian. 
For instance, considering the trivial index $i \equiv 1$, the construction gives only the first eigenvalue, while $\Sigma_k^{(i)}=\emptyset$ for every $k\ge2$.

Among the most common choices of nontrivial indices producing infinite and \textit{divergent} sequences of eigenvalues we recall the Krasnoselskii genus $\gamma_G$ \cite{APA,rabinowitz}, cohomological index  $\gamma_P$ \cite{fadell,perera}, and cogenus $\gamma_F$ \cite{DR0,F}. 
In particular, the Krasnoselskii genus and cogenus are defined, respectively, as
\begin{align}\label{eq:gen}
\gamma_G(A)
&=
\inf
\{
m\in\mathbb{N}:~ \mbox{there exists a continuous odd map } A \to \mathbb{S}^{m-1} 
\},\\
\label{eq:cogen}
\gamma_F(A)
&=
\sup
\{
m\in\mathbb{N}:~ \mbox{there exists a continuous odd map } \mathbb{S}^{m-1} \to A
\}.
\end{align}
Hereinafter, $\mathbb{S}^{k-1}$ is a Euclidean sphere in $\mathbb{R}^k$. 
Note that the definitions \eqref{eq:gen}, \eqref{eq:cogen} work well also for non-compact sets, allowing $\gamma_G(A), \gamma_F(A)=+\infty$.

The eigenvalues constructed via $\gamma_G$, $\gamma_P$, $\gamma_F$ will be denoted as $\lambda_k^G(p;\Omega)$, $\lambda_k^P(p;\Omega)$, $\lambda_k^F(p;\Omega)$, respectively. 
In the same way, the corresponding sets \eqref{eq:sigmai} will be denoted as $\Sigma_k^G$, $\Sigma_k^P$, $\Sigma_k^F$. 
It is known (see \cite[Proposition~4.7]{perera-book}) that 
\begin{equation}\label{eq:ggg0x}
\gamma_F(A) \leq \gamma_P(A) \leq \gamma_G(A)
\quad \text{for any symmetric set}~ A \subset \mathcal{S}(\Omega),
\end{equation}
and hence
\begin{equation}\label{eq:lll0}
\lambda_k^G(p;\Omega) \leq \lambda_k^P(p;\Omega) \leq \lambda_k^F(p;\Omega)
\quad \text{for any}~ k \in \mathbb{N}. 
\end{equation}

In this work, we are mainly interested in the \textit{cogenus eigenvalues}. 
As noted in \cite{perera-book}, they were first defined by Dr\'abek \& Robinson \cite{DR0} in the following, equivalent, way: 
\begin{equation}\label{highereigenvalues2} 
\lambda_k^{DR}(p;\Omega) 
= 
\inf_{A \in \Sigma_k^{DR}} \max_{u \in A} E(u), 
\quad k \in \mathbb{N}, 
\end{equation}
where
\begin{equation}\label{eq:Fk}
\Sigma_k^{DR}
=
\left\{A \subset \mathcal{S}(\Omega):~  A=h(\mathbb{S}^{k-1}) \mbox{ for a continuous odd map } h: \mathbb{S}^{k-1} \to \mathcal{S}(\Omega)\right\}. 
\end{equation}
For clarity, the equivalence is justified in Lemma~\ref{lem:equil} below. 
Notice that, in the linear case $p=2$, the set $\{\lambda_k^{DR}(2;\Omega)\}$ describes the whole spectrum of the Dirichlet Laplacian in $\Omega$, see \cite[Propositions~4.7 and 5.4]{cuesta}. 

For later purposes, we need the continuity of $\lambda_k^{DR}(p;\Omega)$ with respect to $p$.
\begin{proposition}\label{prop:continuity-eigenvalues}
Let $k \in \mathbb{N}$.
Then the map $p \mapsto \lambda_k^{DR}(p;\Omega)$ is continuous in $(1,+\infty)$.
\end{proposition}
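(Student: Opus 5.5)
We prove upper and lower semicontinuity separately, exploiting that the admissible class $\Sigma_k^{DR}$ consists of images of spheres under continuous odd maps. Fix $p_0\in(1,+\infty)$ and $p_n\to p_0$.

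\emph{Upper semicontinuity.} Given $\varepsilon>0$, pick a continuous odd $h:\mathbb{S}^{k-1}\to\mathcal{S}(\Omega)$ (for $p_0$) with $\max_{\mathbb{S}^{k-1}}\int_\Omega|\nabla h(\sigma)|^{p_0}dx\le\lambda_k^{DR}(p_0;\Omega)+\varepsilon$. Since $C_c^\infty(\Omega)$ is dense in $W_0^{1,p_0}(\Omega)$ and $h(\mathbb{S}^{k-1})$ is compact, a partition of unity on $\mathbb{S}^{k-1}$ produces an odd continuous map $\tilde h:\mathbb{S}^{k-1}\to C_c^\infty(\Omega)$, with image in a fixed finite-dimensional space $V$ spanned by smooth compactly supported functions, which is uniformly close to $h$ in $W^{1,p_0}$. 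Oddness is kept by antisymmetrizing, $\tilde h(\sigma)\mapsto(\tilde h(\sigma)-\tilde h(-\sigma))/2$. In particular $\tilde h(\sigma)\neq0$. On the finite-dimensional space $V$, the quantities $\int|\nabla v|^{p}$ and $\int|v|^p$ are jointly continuous in $(v,p)$. Normalizing in $L^{p_n}$ therefore gives admissible sets for $p_n$, with energies converging uniformly to those for $p_0$. This yields $\limsup_n\lambda_k^{DR}(p_n;\Omega)\le\lambda_k^{DR}(p_0;\Omega)+2\varepsilon$.

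\emph{Lower semicontinuity.} This is the main obstacle, because an odd map admissible for $p_n$ need not lie in $W_0^{1,p_0}(\Omega)$ when $p_n<p_0$. We argue in two regimes.

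\emph{Case $p_n\downarrow p_0$.} For $p_n>p_0$, Hölder gives $W_0^{1,p_n}\subset W_0^{1,p_0}$ continuously. Take $h_n$ almost optimal for $p_n$ and set $g_n=h_n/\|h_n\|_{L^{p_0}}$, which is admissible for $p_0$. Hölder yields $\|\nabla u\|_{p_0}\le|\Omega|^{1/p_0-1/p_n}\|\nabla u\|_{p_n}$ and $\|u\|_{p_n}\le|\Omega|^{1/p_n-1/p_0}\|u\|_{p_0}$. Combining the two, the Rayleigh quotient $(\|\nabla u\|_{p_0}/\|u\|_{p_0})^{p_0}$ is bounded by $|\Omega|^{c(p_n-p_0)}\bigl(\lambda_k^{DR}(p_n;\Omega)+\varepsilon\bigr)^{p_0/p_n}$ with $c\,(p_n-p_0)\to0$. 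Hence $\lambda_k^{DR}(p_0;\Omega)\le\liminf\lambda_k^{DR}(p_n;\Omega)$ from above. (The same inequality, read from $p_0$ to $p_n$ with $p_n<p_0$, also gives the upper bound from below.)

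\emph{Case $p_n\uparrow p_0$.} We need a compactness argument. Suppose $\lambda_k^{DR}(p_n;\Omega)\le\Lambda<\lambda_k^{DR}(p_0;\Omega)$. Take odd maps $h_n$ with energy at most $\Lambda+1/n$. By Proposition~\ref{prop:eigenvalues} and the deformation lemma, we may instead work with eigenfunctions. The $k$-th level for $p_n$ is attained by eigenfunctions $u_n$, which are bounded in $W_0^{1,q}$ for each fixed $q<p_0$. Passing to the limit, $u_n\to u$ weakly, and via strong convergence of gradients (Minty trick) $u$ is an eigenfunction for $p_0$ with eigenvalue $\lim\lambda_k^{DR}(p_n;\Omega)$. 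This does not yet identify it as the $k$-th cogenus eigenvalue.

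To obtain the inequality, we would instead smooth the $h_n$ with a fixed mollification of width $\delta$. Convolution with a cutoff preserves oddness and continuity, and maps into $W_0^{1,p_0}$ with bounded energy depending on $\delta$. However, bounding $\int|\nabla(h_n*\rho_\delta)|^{p_0}$ by the $p_n$-energy requires higher integrability, so the uniform control is unclear. The step I would try first is the characterization mentioned in the abstract: express $\lambda_k^{DR}$ as an infimum over odd maps into finite-dimensional subspaces of $C_c^\infty(\Omega)$. Upper semicontinuity then follows as in the first step, and lower semicontinuity from above follows from monotonicity of $p\mapsto(\lambda_k^{DR}(p;\Omega)|\Omega|^{\ldots})^{1/p}$ given by the Hölder comparison. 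For the approach from below, the resulting monotone quantity combined with the first step gives $\lambda_k^{DR}(p_0;\Omega)\ge\limsup$, but not the needed $\le\liminf$. I expect this last inequality to require a genuine uniform approximation argument, and it is the crux of the proof.
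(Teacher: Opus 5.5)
\textbf{There is a genuine gap: lower semicontinuity, the substance of the proposition, is not established.} Your upper semicontinuity step is sound: the finite-dimensional $C_c^\infty$ approximation, antisymmetrization, and joint continuity in $(v,p)$ all work, and from both sides of $p_0$.

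\emph{The case $p_n\downarrow p_0$.} Your argument uses a reversed H\"older inequality. For $p_n>p_0$ one has $\|u\|_{L^{p_0}}\le|\Omega|^{1/p_0-1/p_n}\|u\|_{L^{p_n}}$. So H\"older gives no lower bound on $\|h_n(\sigma)\|_{L^{p_0}}$, and the $p_0$-Rayleigh quotient of $g_n$ is not controlled. This case can be repaired. For $s=p_n/p_0>1$, the odd continuous map $u\mapsto|u|^{s-1}u$ sends the $L^{p_n}$-unit sphere into the $L^{p_0}$-unit sphere. Applying H\"older to $s|u|^{s-1}\nabla u$ gives $\int_\Omega|\nabla(|u|^{s-1}u)|^{p_0}dx\le s^{p_0}\bigl(\int_\Omega|\nabla u|^{p_n}dx\bigr)^{p_0/p_n}$. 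Composing with almost optimal odd maps then shows that $p\mapsto p\,\lambda_k^{DR}(p;\Omega)^{1/p}$ is nondecreasing. This replaces your H\"older-based ``monotone quantity'' and gives lower semicontinuity from the right.

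\emph{The case $p_n\uparrow p_0$.} The inequality $\lambda_k^{DR}(p_0;\Omega)\le\liminf_n\lambda_k^{DR}(p_n;\Omega)$ is left open, as you say, and monotonicity gives nothing here. Nothing in your proposal uses that $\Omega$ is Lipschitz, yet left continuity genuinely depends on it. A limit of functions with bounded $p_n$-energies is a priori only known to lie in $W^{1,p_0}(\Omega)\cap\bigcap_{q<p_0}W_0^{1,q}(\Omega)$. Identifying this space with $W_0^{1,p_0}(\Omega)$ is exactly where the regularity of $\partial\Omega$ must enter.

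\emph{Why a direct argument stalls, and what the paper does instead.} Almost optimal odd maps $h_n:\mathbb{S}^{k-1}\to\cS(\Omega)$ have no equicontinuity, so there is no limit odd map. At best, the compact images $A_n=h_n(\mathbb{S}^{k-1})$ converge in the Hausdorff sense, in a weaker topology, to a compact symmetric set $A$. That set need not contain the image of any odd map of $\mathbb{S}^{k-1}$, so the cogenus can drop in the limit. This is precisely the failure of property $(I_3)$ for $\gamma_F$.

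The paper does not argue directly. It introduces the regularized cogenus $\gamma_C(A)=\inf_{U\in\cN(A)}\gamma_F(U)$ and proves it satisfies $(I_1)$--$(I_4)$ (Lemma~\ref{lem:indices}). Property $(I_3)$ gives $\gamma_C(A)\ge\gamma_C(A_n)$ as soon as $A_n$ lies in a suitable neighborhood of $A$, so the index survives such limits. Lemma~\ref{lem:equil} then shows $\lambda_k^{DR}=\lambda_k^F=\lambda_k^C$. This is the ``characterization'' mentioned in the abstract, not a finite-dimensional one. With it, the $\Gamma$-convergence theory of Degiovanni--Marzocchi applies: for indices satisfying $(I_1)$--$(I_4)$ on Lipschitz domains it yields continuity in $p$, including the left limit your argument could not reach. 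That theory is also where the Lipschitz assumption is used.
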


A general framework of establishing the continuity of $p \to \lambda_k^{(i)}(p;\Omega)$ is given by Degiovanni \& Marzocchi in \cite{degiovannimarzocchi}, which is based on the $\Gamma$-convergence theory developed in \cite{degiovannimarzocchi2}. 
More precisely, 
since $\Omega$ is a Lipschitz domain,  
\cite[Proposition~2.1 (e), Theorem~4.1 (a), (c), and Corollary~6.2]{degiovannimarzocchi} provide the continuity if the index $i$ satisfies apart from \ref{I1}, \ref{I2} the following two additional assumptions:
\begin{enumerate}[label={$(I_{\arabic*})$},start=3]
    \item\label{I3}
    If $ X$ is a topological vector space and $ A \subset X \setminus \{0\}$ is nonempty, symmetric, and compact, then there exists an open subset $ U$ of $ X \setminus \{0\}$ such that $ A \subset U$ and 
    $$
    i(\widehat{A}) \leq i(A)
    $$
    for any nonempty, symmetric, compact set $ \widehat{A} \subset U$.
    
    \item\label{I4}
    If $ X$ is a normed space with $ 1 \leq \dim X < +\infty$, then
    $$
    i\bigl(\{u \in X : \|u\|_X = 1\}\bigr) = \dim X.
    $$
\end{enumerate}
In fact, according to \cite{degiovannimarzocchi2}, it is sufficient to assume in \ref{I1}, \ref{I2}, \ref{I3} that $X,Y$ are metrizable, locally convex topological vector spaces. 
The Krasnoselskii genus and cohomological index do satisfy \ref{I3}, \ref{I4} (see \cite[Propositions~7.5, 7.7]{rabinowitz} and \cite[Proposition~2.12]{perera-book}, respectively), and hence 
$$
p \mapsto \lambda_k^{G}(p;\Omega)
\quad \text{and} \quad
p \mapsto \lambda_k^{P}(p;\Omega)
\quad \text{are continuous in}~ (1,+\infty).
$$ 
However, the cogenus $\gamma_F$ does not seem to satisfy \ref{I3}, in general. 

In order to overcome this difficulty, we introduce a regularized version of the cogenus, extending the definition of Coffman \cite[Eq.~(3.4)]{coffman-classif} to the general, infinite-dimensional setting. 
For a symmetric set $A \subset X \setminus \{0\}$, denote by $\cN(A)$ the family of open
symmetric neighborhoods of $A$ in $X \setminus \{0\}$. 
Define the \textit{regularized cogenus} as 
\begin{equation}\label{eq:gammac}
\gamma_C(A)=\sup\{m\in \mathbb{N}:\text{ for every }U\in \cN(A)\text{ there exists a
continuous odd map }\mathbb{S}^{m-1}\to U\}. 
\end{equation}
Regarding $\mathbb{S}^{k-1}$ as an equatorial section of $\mathbb{S}^{m-1}$ for some $k \leq m$ (see, e.g., \eqref{eq:i} below), any continuous odd map $\mathbb{S}^{m-1}\to U$ can be restricted to a continuous odd map $\mathbb{S}^{k-1}\to U$. 
Therefore, the definition \eqref{eq:gammac} is equivalent to 
\begin{equation}\label{eq:gammac2}
\gamma_C(A)=\inf_{U\in\cN(A)}\gamma_F(U).
\end{equation}

Let us state two main properties of the regularized cogenus. 
\begin{lemma}\label{lem:indices}
Let $X,Y$ be metrizable, locally convex topological vector spaces. 
Then $\gamma_C$ satisfies the assumptions \ref{I1}--\ref{I4}. 
Moreover, for any nonempty, symmetric, compact set $A \subset X \setminus \{0\}$, we have (cf.\ \eqref{eq:ggg0x})
\begin{equation}\label{eq:FCG0}
\gamma_F(A)\le \gamma_C(A)\le \gamma_G(A).
\end{equation}
\end{lemma}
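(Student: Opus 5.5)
Throughout I work with the form \eqref{eq:gammac2}, $\gamma_C(A)=\inf_{U\in\cN(A)}\gamma_F(U)$. I first prove the sandwich \eqref{eq:FCG0}, because it gives \ref{I1} almost at once and feeds into \ref{I4}.

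\emph{Sandwich and \ref{I1}.} For any $U\in\cN(A)$ we have $A\subset U$, so $\gamma_F(A)\le\gamma_F(U)$; taking the infimum over $U$ gives $\gamma_F(A)\le\gamma_C(A)$. For the upper bound, let $m=\gamma_G(A)$, which is finite for compact $A\not\ni0$, and take a continuous odd $g:A\to\mathbb{S}^{m-1}$. Since $X$ is metrizable, I would use Dugundji's extension theorem to extend $g$ to a continuous map $X\to\mathbb{R}^m$, and then replace it by its odd part $\tfrac12(g(x)-g(-x))$. This map agrees with $g$ on $A$, so it is nonzero on $A$. The set $U=\{x\ne0:\tilde g(x)\neq0\}$ is then an open symmetric neighborhood of $A$, and $\tilde g/|\tilde g|$ is an odd map $U\to\mathbb{S}^{m-1}$. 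If there were a continuous odd map $\mathbb{S}^{n-1}\to U$ with $n>m$, composing would give a continuous odd map $\mathbb{S}^{n-1}\to\mathbb{S}^{m-1}$, contradicting Borsuk--Ulam. Hence $\gamma_F(U)\le m$, and so $\gamma_C(A)\le\gamma_G(A)$. For \ref{I1}: $\gamma_C(A)\ge\gamma_F(A)\ge1$, because any $a\in A$ gives the odd map $\mathbb{S}^0\to A$, $\pm1\mapsto\pm a$; finiteness comes from $\gamma_C(A)\le\gamma_G(A)<\infty$; and $\gamma_C(\emptyset)=0$ holds by convention.

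\emph{\ref{I2}.} Let $\pi:A\to Y\setminus\{0\}$ be continuous and odd, and let $V\in\cN(\pi(A))$. I need some $U\in\cN(A)$ with $\gamma_F(U)\le\gamma_F(V)$. Extend $\pi$ by Dugundji to a continuous map $X\to Y$; this uses that $Y$ is locally convex. Replace the extension by its odd part $\tilde\pi$, which still extends $\pi$. Set $U=\tilde\pi^{-1}(V)\setminus\{0\}$. It is open and symmetric, it contains $A$, and it satisfies $\tilde\pi(U)\subset V\subset Y\setminus\{0\}$. Any continuous odd map $\mathbb{S}^{m-1}\to U$ composed with $\tilde\pi$ is a continuous odd map $\mathbb{S}^{m-1}\to V$, so $\gamma_F(U)\le\gamma_F(V)$. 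Taking infima gives $\gamma_C(A)\le\gamma_C(\pi(A))$.

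\emph{\ref{I3}.} This is essentially built into the regularization. Since $\gamma_C(A)=m$ is attained as an infimum of integers, there is a $U\in\cN(A)$ with $\gamma_F(U)=m$. For a compact symmetric $\widehat A\subset U$, the set $U$ itself belongs to $\cN(\widehat A)$, so $\gamma_C(\widehat A)\le\gamma_F(U)=m$.

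\emph{\ref{I4}.} Let $n=\dim X$ and let $S$ be the unit sphere of $X$. Since $X\cong\mathbb{R}^n$, there is an odd homeomorphism $\mathbb{S}^{n-1}\to S$, so $\gamma_F(S)\ge n$. In the other direction, $x\mapsto x/\|x\|$ is an odd map from $S$ onto a copy of $\mathbb{S}^{n-1}$, so $\gamma_G(S)\le n$. The sandwich \eqref{eq:FCG0} then gives $\gamma_C(S)=n$. The hypotheses of \ref{I4} do not explicitly require $X$ to be metrizable, but a finite-dimensional normed space is automatically metrizable and locally convex, so the sandwich applies.

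The main obstacle I expect is the extension step in \ref{I2} and in the upper bound $\gamma_C\le\gamma_G$. Dugundji's theorem needs $X$ metrizable, and it needs the target to be a locally convex space (or a convex subset of one) for the extension to exist and be continuous. This is exactly why the hypotheses are stated for metrizable, locally convex spaces. I would also check carefully that the odd part of the extension still agrees with $\pi$ on $A$, which follows from the oddness of $\pi$ and the symmetry of $A$.
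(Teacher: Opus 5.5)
Your proof is correct and follows the paper's argument essentially step by step. You extend the odd map, take its odd part and apply Borsuk--Ulam to get $\gamma_C\le\gamma_G$. For \ref{I2} you pull back a neighborhood $V$ of $\pi(A)$ under an odd Dugundji extension (the paper's $W$ is exactly your $\tilde\pi^{-1}(V)$). For \ref{I3} you use a minimizing neighborhood, and for \ref{I4} the sandwich.

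The only cosmetic point is in \ref{I4}: there $x\mapsto x/\|x\|$ is just the identity on $S$. The odd map into the Euclidean sphere that you need is the inverse of your odd homeomorphism $\mathbb{S}^{n-1}\to S$, or equivalently $x\mapsto Tx/|Tx|$ for a linear isomorphism $T:X\to\mathbb{R}^n$.
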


\begin{lemma}\label{lem:equil}
For any $k \in \mathbb{N}$, we have 
$$
\lambda_k^{DR}(p;\Omega) = \lambda_k^F(p;\Omega) = \lambda_k^C(p;\Omega). 
$$
\end{lemma}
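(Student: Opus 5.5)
We prove the chain of equalities through two separate identities, $\lambda_k^{DR}=\lambda_k^F$ and $\lambda_k^F=\lambda_k^C$. In each case we compare the admissible families $\Sigma_k^{DR}$, $\Sigma_k^F$, $\Sigma_k^C$ and use continuity of $E$ together with the compactness built into the definitions.

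\emph{Step 1: $\lambda_k^{DR}=\lambda_k^F$.} Take $A=h(\mathbb{S}^{k-1})\in\Sigma_k^{DR}$. Then $A$ is compact and symmetric, since $h$ is continuous and odd. Moreover, $h$ itself is a continuous odd map $\mathbb{S}^{k-1}\to A$, so $\gamma_F(A)\ge k$. Hence $\Sigma_k^{DR}\subset\Sigma_k^F$, which gives $\lambda_k^F\le\lambda_k^{DR}$.

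Conversely, let $A\in\Sigma_k^F$. Since $\gamma_F(A)\ge k$, there is a continuous odd map $h:\mathbb{S}^{m-1}\to A$ with $m\ge k$. Restricting $h$ to an equatorial $\mathbb{S}^{k-1}\subset\mathbb{S}^{m-1}$, as in the remark before \eqref{eq:gammac2}, gives $h(\mathbb{S}^{k-1})\in\Sigma_k^{DR}$. Since $h(\mathbb{S}^{k-1})\subset A$, we get $\max_{h(\mathbb{S}^{k-1})}E\le\max_A E$. Taking the infimum yields $\lambda_k^{DR}\le\lambda_k^F$.

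\emph{Step 2: $\lambda_k^F=\lambda_k^C$.} By \eqref{eq:FCG0} in Lemma~\ref{lem:indices}, $\gamma_F(A)\le\gamma_C(A)$ for compact symmetric $A$. Thus $\Sigma_k^F\subset\Sigma_k^C$, and so $\lambda_k^C\le\lambda_k^F$.

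The reverse inequality is the main point. Fix $A\in\Sigma_k^C$ and $\varepsilon>0$. By continuity of $E$ on the compact set $A$, there is $\delta>0$ such that $E(v)<\max_A E+\varepsilon$ whenever $\operatorname{dist}(v,A)<\delta$ in $\Wo$. Let $U$ be the open $\delta$-neighbourhood of $A$ in $\Wo\setminus\{0\}$; it is symmetric because $A$ is, so $U\in\cN(A)$. Since $\gamma_C(A)\ge k$, there is a continuous odd map $g:\mathbb{S}^{k-1}\to U$.

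The image of $g$ lies in $U$ rather than in $\mathcal{S}(\Omega)$, so we normalize and set $h=g/\|g\|_{L^p}$. This is well defined because $0\notin U$, and it is continuous and odd. We control $E$ under this normalization using the $p$-homogeneity $E(v/\|v\|_{p})=E(v)/\|v\|_p^p$. Since $\|\cdot\|_{L^p}\le C\|\cdot\|_{\Wo}$ and $\|u\|_p=1$ on $A$, we have $\|v\|_p\ge 1-C\delta$ on $U$. Therefore
\[
E(h(x))\le \frac{\max_A E+\varepsilon}{(1-C\delta)^p}.
\]
Then $h(\mathbb{S}^{k-1})\in\Sigma_k^{DR}$, and
\[
\lambda_k^{DR}\le\frac{\max_A E+\varepsilon}{(1-C\delta)^p}.
\]
Letting $\delta,\varepsilon\to0$ and taking the infimum over $A\in\Sigma_k^C$ gives $\lambda_k^{DR}\le\lambda_k^C$. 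Combined with Step 1, this yields $\lambda_k^F\le\lambda_k^C$.

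The one delicate point is that the approximating maps land in a neighbourhood of $A$ rather than in $\mathcal{S}(\Omega)$. The $p$-homogeneity of $E$, together with the uniform lower bound on $\|v\|_p$ over $U$, handles this. The conventions for empty families, where $\lambda_k=+\infty$, are consistent throughout by the same inclusions.
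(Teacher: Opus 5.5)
Your proof is correct and follows essentially the paper's route. Step 1 is identical, and in Step 2 you likewise get $\lambda_k^C\le\lambda_k^F$ from $\gamma_F\le\gamma_C$, and the reverse inequality from an odd map of a sphere into a neighbourhood of a near-optimal $A$, followed by $L^p$-normalization.

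The differences are technical only:
\begin{itemize}
\item The paper takes the neighbourhood $U_\varepsilon=\{u\neq0:\ E(u)/\int_\Omega|u|^p\,dx<\lambda_k^C+2\varepsilon\}$. There the $0$-homogeneous Rayleigh quotient makes the normalization cost-free.
\item The paper maps $\mathbb{S}^{m-1}$ with $m=\gamma_C(A)$ into this set and then uses $\lambda_k^{DR}\le\lambda_m^{DR}$.
\item Your metric $\delta$-neighbourhood instead needs the uniform-continuity and $\|v\|_p\ge 1-C\delta$ bookkeeping. You handle this correctly, implicitly taking $C\delta<1$.
\end{itemize}
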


These two lemmas will be proved in Section~\ref{sec:equivalence}. 
In view of them, \cite{degiovannimarzocchi} yields the continuity of $\lambda_k^{DR}(p;\Omega)$ with respect to $p$ stated in Proposition~\ref{prop:continuity-eigenvalues}. 

\medskip
Hereinafter, we mainly work with the cogenus eigenvalues and we denote them simply as $\lambda_k(p;\Omega)$, unless otherwise explicitly stated.

\subsection{Nodal domain bounds}
Let $\varphi_k \in W_0^{1,p}(\Omega)$ be any eigenfunction corresponding to $\lambda_k(p;\Omega)$. 
Recalling that $\varphi_k \in C^1(\Omega) \cap L^\infty(\Omega)$, one can define its nodal set as
$$
\mathcal{Z}(\varphi_k) = \overline{\{x \in \Omega:~ \varphi_k(x) = 0\}}.
$$
Any connected component of $\Omega \setminus \mathcal{Z}(\varphi_k)$ is called a \textit{nodal domain}, and we denote the number of nodal domains of $\varphi_k$ as $\nu(\varphi_k)$.
In the present work, we are interested in estimating this quantity, and we call any such estimate a \textit{nodal domain bound}. 

In the linear case $p=2$, the well-known Courant nodal domain theorem asserts that $\nu(\varphi_k) \leq k$ for all $k \in \mathbb{N}$. 
This estimate can be improved in the regime $k \to +\infty$, as was shown by Pleijel \cite{pleijel} in the planar case and then generalized by B\'erard \& Meyer \cite{BM} to the general case $N \geq 2$, see \cite{Helffer} for an overview.  
Namely, the following asymptotic refinement of the Courant nodal domain bound holds:
\begin{equation}\label{eq:P}
	\limsup_{k \to +\infty} \frac{\nu(\varphi_k)}{k} 
	\leq 
	\frac{(2\pi)^N}{|B_1|^2 \, j_{\frac{N}{2}-1,1}^N}
	< 1,
\end{equation}
where $j_{\mu,1}$ is the first positive zero of the Bessel function $J_{\mu}$, and $B_1$ is an open unit $N$-dimensional ball.

A generalization of the  Courant nodal domain theorem to the nonlinear case $p \neq 2$ was obtained by Dr\'abek \& Robinson \cite{DR} (see also \cite{ananetsouli} for a slightly weaker version). 
It asserts that
\begin{equation}\label{eq:DRcourant}
\nu(\varphi_k) \leq 2k-2
\quad \text{for all }
k \geq 2.
\end{equation}
In addition to \eqref{eq:DRcourant}, \cite[Theorems~3.1 and 3.2]{DR} show that the classical Courant nodal domain bound 
\begin{equation}\label{eq:courant}
\nu(\varphi_k) \leq k 
\quad \text{for a given } k \in \mathbb{N}
\end{equation}
holds if either $\varphi_k$ satisfies the unique continuation property (which is unknown even for the second eigenfunction when $p \neq 2$) or $\lambda_k < \lambda_{k+1}$.  
Moreover, by \cite[Theorem~3.4]{DR}, if $\nu(\varphi_k) = k +l$ for some $\varphi_k$ and $l \in \{1,\dots,k-2\}$, then there exists a $k$-th eigenfunction $\psi_k$ such that $\nu(\psi_k) = k-l$. 
In particular, \eqref{eq:courant} always holds for \textit{some} eigenfunction $\varphi_k$. 

The unconditional nodal domain bound \eqref{eq:DRcourant} implies 
\begin{equation}\label{eq:P1}
\mathfrak{P}(p;\Omega) 
:= 
\limsup_{k \to +\infty} \frac{\nu(\varphi_k)}{k} \leq 2.
\end{equation}
The aim of the present work is to investigate the extent to which the core arguments of Pleijel \cite{pleijel} (see also \cite{BM,Helffer}) 
can be generalized to the nonlinear setting and lead to upper bounds on the constant $\mathfrak{P}(p;\Omega)$ better than \eqref{eq:P1}.

\subsection{Main results}

We start with a general scheme, which is due to \cite{pleijel}, in essence; see also \cite{BM,Helffer}. 
Considering the restriction $\varphi_k|_{\Omega_i}$ of $\varphi_k$ to some nodal domain $\Omega_i$ and recalling that $\varphi_k \in C(\Omega)$, we have $\varphi_k|_{\Omega_i} \in W_0^{1,p}(\Omega_i) \setminus \{0\}$, see \cite[Lemma~5.6]{CuestaFucik}. 
Hence, by the weak formulation \eqref{eq:D:weak}, $\varphi_k|_{\Omega_i}$ is a sign-constant eigenfunction of the $p$-Laplacian in $\Omega_i$ corresponding to the eigenvalue $\lambda_k(p;\Omega)$. 
Using the variational characterization of $\lambda_1(p;\Omega_i)$, we get
\begin{equation}\label{eq:lambdan=lambda0}
\lambda_1(p;\Omega_i) 
\leq
\frac{\int_{\Omega_i} |\nabla \varphi_k|^p \,dx}{\int_{\Omega_i} |\varphi_k|^p \,dx}
=
\lambda_k(p;\Omega), \quad i \in \{1,\dots,\nu(\varphi_k)\}. 
\end{equation}
Since the only sign-constant eigenfunction is the first one (see, e.g., \cite{kawohl-lind}), we actually have 
\begin{equation}\label{eq:lambdan=lambda1}
\lambda_1(p;\Omega_i) = \lambda_k(p;\Omega), \quad i \in \{1,\dots,\nu(\varphi_k)\}. 
\end{equation}
Consider now the scaling-free quantity $|\Omega_i| \lambda_1^{N/p}(p;\Omega_i)$ and let $\mathcal{L}_i > 0$ be any of its lower bounds, i.e.,
\begin{equation}\label{eq:lambdai}
|\Omega_i| \lambda_1^{N/p}(p;\Omega_i) \geq \mathcal{L}_i, \quad i \in \{1,\dots,\nu(\varphi_k)\}. 
\end{equation}
Noting that $|\Omega_1| + \dots + |\Omega_{\nu(\varphi_k)}| \leq |\Omega|$ and using \eqref{eq:lambdan=lambda0} (or \eqref{eq:lambdan=lambda1}), we sum the inequalities \eqref{eq:lambdai} over $i$ and arrive at
\begin{equation}\label{eq:plej1}
|\Omega| \lambda_k^{N/p}(p;\Omega) 
\geq
\sum_{i=1}^{\nu(\varphi_k)} \mathcal{L}_i 
\geq
\nu(\varphi_k) \min\{\mathcal{L}_1,\dots,\mathcal{L}_{\nu(\varphi_k)}\}. 
\end{equation}
From here, the number $\nu(\varphi_k)$ of nodal domains of $\varphi_k$ gets the following estimate, which is valid for any $k \in \mathbb{N}$, cf.\ \cite[Corollary~4.1]{cuesta} and \cite[Lemma~2.2]{DR}.

\begin{proposition}\label{prop:pleij1}
Let $k \in \mathbb{N}$. 
Let $\mathcal{L}_i>0$ be given by \eqref{eq:lambdai}, $i \in \{1,\dots,\nu(\varphi_k)\}$. 
Then 
\begin{equation}\label{eq:mun1}
\nu(\varphi_k) 
\leq 
|\Omega| \lambda_k^{N/p}(p;\Omega)
\left(
\min\{\mathcal{L}_1,\dots,\mathcal{L}_{\nu(\varphi_k)}\}
\right)^{-1}.
\end{equation}
\end{proposition}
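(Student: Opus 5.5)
The plan is to formalize the chain of inequalities sketched just before the statement, making each step rigorous.

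First, fix an eigenfunction $\varphi_k$ of $\lambda_k(p;\Omega)$ and let $\Omega_1,\dots,\Omega_{\nu(\varphi_k)}$ be its nodal domains. We may assume $\nu(\varphi_k)$ is finite; if it were infinite, the argument applied to any finite subfamily bounds its cardinality uniformly by the right-hand side of \eqref{eq:mun1}, with the minimum replaced by the infimum over all $\mathcal{L}_i$. Since $\varphi_k \in C(\Omega)$, \cite[Lemma~5.6]{CuestaFucik} gives $\varphi_k|_{\Omega_i} \in W_0^{1,p}(\Omega_i)\setminus\{0\}$. Extending it by zero, $\varphi_k\chi_{\Omega_i}$ is an admissible test function in \eqref{eq:D:weak}. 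This yields
$$
\int_{\Omega_i}|\nabla \varphi_k|^p\,dx=\lambda_k(p;\Omega)\int_{\Omega_i}|\varphi_k|^p\,dx .
$$
Combined with the Rayleigh-quotient characterization of $\lambda_1(p;\Omega_i)$, this gives \eqref{eq:lambdan=lambda0}, i.e., $\lambda_1(p;\Omega_i)\le\lambda_k(p;\Omega)$. Only this inequality is needed; the equality \eqref{eq:lambdan=lambda1} is not required.

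Next, raise \eqref{eq:lambdan=lambda0} to the power $N/p>0$ and combine it with \eqref{eq:lambdai}:
$$
\mathcal{L}_i\le |\Omega_i|\lambda_1^{N/p}(p;\Omega_i)\le |\Omega_i|\lambda_k^{N/p}(p;\Omega).
$$
The nodal domains are pairwise disjoint open subsets of $\Omega$, so $\sum_i|\Omega_i|\le|\Omega|$. Summing over $i$ therefore gives \eqref{eq:plej1}:
$$
\nu(\varphi_k)\min_i\mathcal{L}_i\le\sum_i\mathcal{L}_i\le|\Omega|\lambda_k^{N/p}(p;\Omega).
$$
Dividing by $\min_i\mathcal{L}_i>0$ yields \eqref{eq:mun1}.

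The only potentially delicate point is the justification that the restriction to a nodal domain lies in $W_0^{1,p}(\Omega_i)$ and may serve as a test function. No regularity of $\partial\Omega_i$ is assumed, so this is not automatic. The paper's own citation \cite[Lemma~5.6]{CuestaFucik} settles it, using only the continuity of $\varphi_k$. Everything else is elementary, and the finiteness of $\nu(\varphi_k)$ is handled as indicated above.
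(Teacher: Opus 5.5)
Your proposal is correct and follows the paper's argument essentially verbatim: you restrict $\varphi_k$ to each nodal domain via \cite[Lemma~5.6]{CuestaFucik}, use the Rayleigh-quotient inequality \eqref{eq:lambdan=lambda0}, and sum \eqref{eq:lambdai} using $\sum_i |\Omega_i| \le |\Omega|$. Your detour for a possibly infinite $\nu(\varphi_k)$ is unnecessary, since finiteness already follows from \eqref{eq:DRcourant}, or directly from \eqref{eq:FK}, which forces $|\Omega_i| \ge |B_1|\lambda_1^{N/p}(p;B_1)\,\lambda_k^{-N/p}(p;\Omega)$; it is also slightly inaccurate as phrased, because an infimum over infinitely many arbitrary $\mathcal{L}_i>0$ may vanish.
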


The simplest and arguably most useful choice for $\mathcal{L}_i$ comes from the classical 
Faber--Krahn inequality. 
It holds for any open set of finite measure, giving, in particular, the uniform bound
\begin{equation}\label{eq:FK}
|\Omega_i| \lambda_1^{N/p}(p;\Omega_i) 
\geq 
\mathcal{L}_i := |B_1| \lambda_1^{N/p}(p;B_1), \quad i \in \{1,\dots,\nu(\varphi_k)\},
\end{equation}
where we recall that $B_1$ is an open unit ball in $\mathbb{R}^N$, for definiteness. 

\begin{remark}
In some cases, the Faber--Krahn inequality \eqref{eq:FK} can be substituted by a better lower bound. 
In the linear case $p=2$, this route leads to fine estimates and even exact values of $\mathfrak{P}(2;\Omega)$ when $\Omega$ is a domain with separable geometry, 
see, e.g., \cite{Bob2,Helffer} and references therein. 	
However, in the nonlinear case $p \neq 2$, geometric properties of $\{\Omega_i\}$ are not available even for the simplest domains, to the best of our knowledge. 
\end{remark}

In the asymptotic regime $k \to +\infty$, $\lambda_k(p;\Omega)$ obeys the Weyl law, cf.\ \cite{F,mazur}. 	
\begin{theorem}\label{thm:weyl}
There exists $C_{\mathcal{W}} = C_{\mathcal{W}}(p,N) > 0$ such that
\begin{equation}\label{eq:weyl}
\lim_{k\to +\infty} \frac{\lambda_k(p;\Omega)}{k^{p/N}} = \frac{C_{\mathcal{W}}}{|\Omega|^{p/N}}.
\end{equation}
\end{theorem}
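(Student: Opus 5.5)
We will prove the Weyl law by the Dirichlet–Neumann bracketing scheme of Friedlander and Mazurenko (cf.\ \cite{F,mazur}), adapted to the cogenus eigenvalues via Lemma~\ref{lem:equil}. The basic object is the eigenvalue counting function
\[
N(\lambda;\Omega)=\sup\{k:\lambda_k(p;\Omega)\le\lambda\},
\]
together with the analogous counting function $\widetilde N(\lambda;Q)$ for the Neumann problem on a cube $Q$, defined by the same cogenus minimax over $W^{1,p}(Q)$ with constraint $\int_Q|u|^p=1$. The claim \eqref{eq:weyl} is equivalent to $N(\lambda;\Omega)\sim c\,|\Omega|\lambda^{N/p}$ as $\lambda\to+\infty$, with $C_{\mathcal W}=c^{-p/N}$. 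We therefore aim to establish
\[
N(\lambda;\Omega)=c\,|\Omega|\lambda^{N/p}(1+o(1)), \qquad \lambda\to+\infty.
\]

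\textbf{Step 1 (the unit cube).} For the unit cube $Q_1$, we show that $N(\lambda;Q_1)\lambda^{-N/p}$ converges. By scaling, $\lambda_k(p;Q_t)=t^{-p}\lambda_k(p;Q_1)$. Splitting $Q_{mt}$ into $m^N$ disjoint cubes $Q_t$ gives superadditivity:
\[
N(\lambda;Q_{mt})\ge m^N N(\lambda;Q_t).
\]
This follows by taking odd maps $h_j:\mathbb{S}^{k-1}\to\mathcal S(Q_t^{(j)})$ and forming the join map $\mathbb{S}^{m^Nk-1}\to\mathcal S(Q_{mt})$, $x=(x_1,\dots,x_{m^N})\mapsto \sum_j |x_j|h_j(x_j/|x_j|)$ normalized. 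This map is odd and continuous, and its energy is bounded by the maximum of the pieces because the supports are disjoint. This is exactly where cogenus (maps \emph{from} spheres) is the convenient index.

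For the Neumann counting function we obtain the reverse, subadditive inequality by the same splitting. We then need $\lambda_k^{\mathrm{Neu}}\le\lambda_k^{\mathrm{Dir}}$, which is immediate by inclusion of spaces, together with a comparison
\[
N(\lambda;Q_1)\le \widetilde N(\lambda;Q_1)\le N(\lambda(1+\varepsilon);Q_{1+\delta})+C_{\varepsilon,\delta}
\]
at the level of counting functions. This comparison is obtained via an extension/cut-off operator from $W^{1,p}(Q_1)$ into $W_0^{1,p}$ of a slightly larger cube, which distorts the Rayleigh quotient by a factor $1+\varepsilon$ up to a lower-order $L^p$ term. Combining superadditivity and subadditivity, a Fekete-type argument gives
\[
\lim_{\lambda\to\infty}N(\lambda;Q_1)\lambda^{-N/p}=c\in(0,\infty).
\]

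\textbf{Step 2 (general $\Omega$).} For a general domain we tile $\mathbb R^N$ by cubes of side $t$. Let $\Omega^-_t$ be the union of the cubes contained in $\Omega$ and $\Omega^+_t$ the union of the cubes meeting $\overline\Omega$. Since $\Omega$ is a bounded Lipschitz domain, its boundary has measure zero, so $|\Omega^\pm_t|\to|\Omega|$ as $t\to0$. Domain monotonicity together with the join construction gives the lower bound
\[
N(\lambda;\Omega)\ge\#\{\text{cubes in }\Omega^-_t\}\,N(\lambda t^p;Q_1).
\]
For the upper bound we need a \emph{subadditivity} inequality over the cubes of $\Omega^+_t$, namely
\[
N(\lambda;\Omega)\le\sum_{\text{cubes}}\widetilde N(\lambda;Q_t^{(j)}).
\]
Taking $\lambda\to\infty$ and then $t\to0$ then yields the asymptotics with constant $c|\Omega|$.

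\textbf{Main obstacle.} The hardest step is the upper-bound subadditivity. For $p=2$ it follows from orthogonality. Here we must show that if $A=h(\mathbb S^{k-1})$ has $\max_A E\le\lambda$, then $k\le\sum_j\widetilde N(\lambda;Q^{(j)})$. Our plan is to compose $h$ with the restriction to each cube and use that the regularized cogenus $\gamma_C$ is bounded above by the genus $\gamma_G$ (Lemma~\ref{lem:indices}). Since $\lambda_k^C=\lambda_k^F$ by Lemma~\ref{lem:equil}, it suffices to show that the restricted set has genus at most $\sum_j \widetilde N_G$, the genus-type Neumann count on each cube. This in turn requires the product/genus subadditivity
\[
\gamma_G(A)\le\sum_j\gamma_G(A_j\setminus\{0\})
\]
for the sets $A_j$ of restricted pieces, using that each piece with Rayleigh quotient $\le\lambda$ lies in a set of genus at most the Neumann genus count. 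This mixed use of cogenus for the lower bound and genus for the upper bound forces the constant $c$ to come out the same from both sides. Proving that common value is the delicate point, and we would follow Friedlander's argument of squeezing both indices between the same cube limits.
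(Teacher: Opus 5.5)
There is a genuine gap in the upper bound of Step~2, at the point you yourself call ``delicate''. The join construction only gives superadditivity. A subadditive inequality $N(\lambda;\Omega)\le\sum_j\widetilde N(\lambda;Q_t^{(j)})$ needs a covering property of the index: restricting a function whose global Rayleigh quotient is below $\lambda$ to the cubes only guarantees a quotient below $\lambda$ on \emph{some} cube. That is why your plan passes to $\gamma_G$ and ends with $N(\lambda;\Omega)\le\sum_j\widetilde N_G(\lambda;Q_t^{(j)})$.

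To get the constant $c|\Omega|$ from this, you would need $\lambda^{-N/p}\widetilde N_G(\lambda;Q_1)\to c$, where $c$ is the limit of the Dirichlet \emph{cogenus} count. Nothing provides this. An extension operator is odd and maps sublevel sets into sublevel sets, so it only compares counts of the \emph{same} index (Neumann genus with Dirichlet genus). Going from the Dirichlet genus count back to the cogenus count is the wrong direction, since $\gamma_F\le\gamma_G$. Whether genus and cogenus counts have the same Weyl asymptotics is exactly the open problem of Remark~\ref{rem:weyl}. Friedlander \cite{F} only gives the two-sided bounds \eqref{eq:fried10} with different constants $C_1<C_2$, so there is no argument of his that squeezes both indices to a common limit. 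As it stands, your scheme yields only $\limsup_{\lambda\to\infty}\lambda^{-N/p}N(\lambda;\Omega)\le C_2|\Omega|$.

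Separately, the comparison $\widetilde N(\lambda;Q_1)\le N(\lambda(1+\varepsilon);Q_{1+\delta})+C_{\varepsilon,\delta}$ is not justified. For functions whose energy concentrates near $\partial Q_1$, a reflection/cut-off extension does not distort the Rayleigh quotient by only a factor $1+\varepsilon$.

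The missing idea is that superadditivity alone gives both bounds, so you need no Neumann problem, no genus and no subadditivity.

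\textbf{The cube.} Superadditivity and scaling give $N_K^0(m^p\mu)\ge m^N N_K^0(\mu)$ for all $m\in\mathbb{N}$ and $\mu>0$. Choose $m$ with $m^p\mu\le\lambda<(m+1)^p\mu$ and use monotonicity. This gives $\liminf_{\lambda\to\infty}\lambda^{-N/p}N_K^0(\lambda)\ge\mu^{-N/p}N_K^0(\mu)$ for every $\mu$. Hence the limit $c_0$ exists and equals the supremum, and \eqref{eq:fried10} gives $c_0\in(0,+\infty)$. This is how the paper uses \cite[Lemma~6.1]{mazur} together with \cite{F}.

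\textbf{General $\Omega$.} The lower bound comes from inner cubes, as in your Step~2. The upper bound also follows from superadditivity alone (this is what makes the passage to $\Omega$ in \cite[Theorem~6.3]{mazur} index-independent), for instance by the complement argument. Take a large cube $K'\supset\overline\Omega$ and apply superadditivity to the disjoint open sets $\Omega$ and $K'\setminus\overline\Omega$:
\[
N_\Omega^0(\lambda)\le N_{K'}^0(\lambda)-N_{K'\setminus\overline\Omega}^0(\lambda)\le c_0\bigl(|K'|-|K'\setminus\overline\Omega|\bigr)\lambda^{N/p}+o(\lambda^{N/p}).
\]
Here the inner-cube lower bound is applied to the open set $K'\setminus\overline\Omega$. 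Since $|\partial\Omega|=0$, the right-hand side equals $c_0|\Omega|\lambda^{N/p}(1+o(1))$. Because only the cogenus is used throughout, the constants in the lower and upper bounds coincide automatically.
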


In the linear case $p=2$, it is well known that
\begin{equation}\label{eq:weylp=2}
C_{\mathcal{W}} = \frac{(2 \pi)^{2}}{|B_1|^{2/N}}.
\end{equation}
One way to derive this value is to employ the exact structure of the eigenspaces of the Laplacian in an $N$-dimensional cube.  
However, no such information is known in the case $p \neq 2$, and we can only provide the following semi-explicit upper bound on $C_{\mathcal{W}}$, see Section~\ref{sec:Cl} for definitions. 
\begin{proposition}\label{prop:weyl}
Let $m \in \mathbb{N}$. 
Let $\Lambda$ be a full-rank point lattice in $\mathbb{R}^N$, and let $\Sigma$ be a corresponding fundamental domain (tiling domain), which is a Lipschitz domain.  
Then
\begin{equation}\label{eq:prop:weyl}
C_{\mathcal{W}}
\leq 
|\Sigma|^{p/N} \, \frac{\lambda_m(p;\Sigma)}{m^{p/N}}. 
\end{equation}
\end{proposition}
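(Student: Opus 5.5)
The idea is to combine the Weyl law (Theorem~\ref{thm:weyl}) with a domain-monotonicity (tiling) argument. Fix $m$, the lattice $\Lambda$, and the fundamental domain $\Sigma$. For a large parameter $t>0$ we take the dilated lattice $t^{-1}\Lambda$ and its translated copies $\Sigma_j = t^{-1}(\Sigma + \ell_j)$, $\ell_j \in \Lambda$. Let $n(t)$ be the number of these copies lying inside $\Omega$; they are pairwise disjoint. Since $\Omega$ is open and bounded with Lipschitz (hence Lebesgue-null) boundary, a standard Riemann-sum/Jordan-measurability argument gives
$$
n(t)\,t^{-N}|\Sigma| \to |\Omega| \quad \text{as } t\to+\infty,
$$
because cells meeting $\partial\Omega$ occupy a vanishing fraction of the volume.

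The key step is the inequality
$$
\lambda_{m n(t)}(p;\Omega) \leq \lambda_m(p;t^{-1}\Sigma) = t^p \lambda_m(p;\Sigma).
$$
To prove it, fix $\varepsilon>0$ and use the characterization \eqref{highereigenvalues2}: choose a continuous odd map $h:\mathbb{S}^{m-1}\to\mathcal{S}(t^{-1}\Sigma)$ with $\max E\circ h \leq t^p\lambda_m(p;\Sigma)+\varepsilon$. Translate it to obtain maps $h_j$ into $\mathcal{S}(\Sigma_j)$, extended by zero to $\Omega$; this is legitimate since $W_0^{1,p}(\Sigma_j)\subset W_0^{1,p}(\Omega)$. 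Represent $\mathbb{S}^{mn-1}$, with $n=n(t)$, as the set of $(s_1,\dots,s_n)$, $s_j\in\mathbb{R}^m$, with $\sum|s_j|^2=1$, and define
$$
H(s)=\frac{\sum_j |s_j|^{2/p}\, h_j(s_j/|s_j|)}{\bigl(\sum_j |s_j|^{2}\bigr)^{1/p}},
$$
with the convention that the term vanishes when $s_j=0$. This $H$ is continuous and odd, since each $|s_j|^{2/p}h_j(s_j/|s_j|)$ is continuous (it tends to $0$ as $s_j\to0$ because $h_j$ is bounded). It maps into $\mathcal{S}(\Omega)$ because the supports are disjoint, so $\int|H|^p=\sum|s_j|^2=1$. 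Disjoint supports also give $E(H(s))=\sum_j|s_j|^2E(h_j(\cdot))\leq t^p\lambda_m(p;\Sigma)+\varepsilon$. Hence $\lambda_{mn}^{DR}(p;\Omega)\leq t^p\lambda_m(p;\Sigma)+\varepsilon$, and letting $\varepsilon\to0$ gives the inequality. Lemma~\ref{lem:equil} identifies $\lambda^{DR}$ with $\lambda_k$, and the scaling $\lambda_m(p;t^{-1}\Sigma)=t^p\lambda_m(p;\Sigma)$ is immediate. This is exactly where the cogenus characterization is convenient: the join of sphere maps is explicit, whereas for the genus one would need a more delicate argument.

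Finally, set $k=mn(t)$ and apply Theorem~\ref{thm:weyl} along this sequence, which tends to infinity as $t\to+\infty$:
$$
\frac{C_{\mathcal{W}}}{|\Omega|^{p/N}}=\lim_{t\to\infty}\frac{\lambda_{mn(t)}(p;\Omega)}{(mn(t))^{p/N}}\leq \limsup_{t\to\infty}\frac{t^p\lambda_m(p;\Sigma)}{m^{p/N}n(t)^{p/N}}=\frac{\lambda_m(p;\Sigma)}{m^{p/N}}\Bigl(\frac{|\Sigma|}{|\Omega|}\Bigr)^{p/N},
$$
using $t^N/n(t)\to|\Sigma|/|\Omega|$. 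Multiplying through by $|\Omega|^{p/N}$ yields \eqref{eq:prop:weyl}.

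The main obstacle is the construction of the odd map $H$, in particular checking its continuity at points where some $s_j=0$ and confirming that the degree bookkeeping gives exactly dimension $mn$. The counting asymptotics for $n(t)$ are routine.
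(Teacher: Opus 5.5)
Your proof is correct. Its architecture matches the paper's: pack $\Omega$ with small copies of $\Sigma$, bound $\lambda_{mn}(p;\Omega)$ by $\lambda_m$ of one cell, then invoke Theorem~\ref{thm:weyl}. The key inequality and the counting are done differently, though.

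\textbf{The key inequality.} The paper obtains $\lambda_{n_k m}(p;\Omega)\le\lambda_m(p;Q_i)$ from the counting-function machinery. It combines the characterization $N_\Omega^0(\lambda)=\gamma_F(M_\lambda^0(\Omega))$ of Lemma~\ref{lem:equality} with Friedlander's superadditivity, Lemma~\ref{lem:superadditivity}. You instead prove the needed piece of superadditivity directly. You join near-optimal sphere maps $h_j:\mathbb{S}^{m-1}\to\mathcal{S}(\Sigma_j)$ into an odd map $\mathbb{S}^{mn-1}\to\mathcal{S}(\Omega)$ with weights $|s_j|^{2/p}$. This is self-contained and checks out; continuity at $s_j=0$ follows from boundedness of the compact set $h(\mathbb{S}^{m-1})$ in $W_0^{1,p}$. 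The price is that it uses the sphere-image (Dr\'abek--Robinson) characterization of $\lambda_m(p;\Sigma)$, so it is tied to the cogenus. The paper's route isolates superadditivity as the only index-specific input, which it reuses in Section~\ref{sec:krasnosel}.

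\textbf{The counting.} The paper optimizes over translations by averaging over $t\in\Sigma_h$. It then introduces $h_{k,m}$ with $n_k=\lceil k/m\rceil$, which gives $\lambda_k(p;\Omega)\le((1-\eta)h_{k,m})^{-p}\lambda_m(p;\Sigma)$ for \emph{every} large $k$. That is a $\limsup$ bound that does not depend on the Weyl limit existing. Your fixed-lattice count along the subsequence $k=mn(t)$ is simpler. However, it needs the limit in Theorem~\ref{thm:weyl} to exist, or else a fill-in using monotonicity of $\lambda_k$ in $k$ together with the freedom to use any $n'\le n(t)$ cells.

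A minor point: your counting does not need $\partial\Omega$ to be Lebesgue-null. Openness of $\Omega$, so that $|\{x\in\Omega:\text{dist}(x,\partial\Omega)>\rho\}|\to|\Omega|$, together with $|\partial\Sigma|=0$ suffices, exactly as in Lemma~\ref{lem:Mh-asymp}.
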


This proposition can be seen as a nonlinear counterpart of \cite{polya} on the validity of P\'olya's conjecture for tiling domains.

\medskip
Combining now the estimate \eqref{eq:P1}, Proposition~\ref{prop:pleij1}, the lower bound \eqref{eq:FK}, Theorem~\ref{thm:weyl}, and Proposition~\ref{prop:weyl},  
we arrive at the following general statement.

\begin{theorem}\label{theorem:pleijel-general}
Let $m \in \mathbb{N}$. 
Let $\Lambda$ be a full-rank point lattice in $\mathbb{R}^N$, and let $\Sigma$ be a corresponding fundamental domain, which is a Lipschitz domain.  
Then
\begin{equation}\label{eq:upperb-general}
\mathfrak{P}(p;\Omega) 
\leq 
\min\left\{
2,
\frac{1}{|B_1|} \frac{C_{\mathcal{W}}^{N/p}}{\lambda_1^{N/p}(p;B_1)}
\right\}
\leq 
\min\left\{
2,
\frac{|\Sigma|}{m |B_1|} \frac{\lambda_m^{N/p}(p;\Sigma)}{\lambda_1^{N/p}(p;B_1)}
\right\}. 
\end{equation}
\end{theorem}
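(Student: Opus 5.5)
The proof combines the already established ingredients. The bound by $2$ is \eqref{eq:P1}, so it suffices to prove the second entries of the two minima. For the first inequality in \eqref{eq:upperb-general}, fix $k$ and any eigenfunction $\varphi_k$ associated with $\lambda_k(p;\Omega)$. We apply Proposition~\ref{prop:pleij1} with the uniform Faber--Krahn choice \eqref{eq:FK}, namely $\mathcal{L}_i = |B_1|\lambda_1^{N/p}(p;B_1)$. This gives
\begin{equation}
\frac{\nu(\varphi_k)}{k} \leq \frac{|\Omega|}{|B_1|\,\lambda_1^{N/p}(p;B_1)} \cdot \frac{\lambda_k^{N/p}(p;\Omega)}{k}.
\end{equation}
We then take $\limsup_{k\to+\infty}$. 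By Theorem~\ref{thm:weyl}, $\lambda_k^{N/p}(p;\Omega)/k = \bigl(\lambda_k(p;\Omega)/k^{p/N}\bigr)^{N/p} \to C_{\mathcal{W}}^{N/p}/|\Omega|$, using the continuity of $t\mapsto t^{N/p}$. The factor $|\Omega|$ cancels, and we get $\mathfrak{P}(p;\Omega) \leq C_{\mathcal{W}}^{N/p}/\bigl(|B_1|\lambda_1^{N/p}(p;B_1)\bigr)$. Since $\varphi_k$ was an arbitrary $k$-th eigenfunction, the bound holds for the $\limsup$ defining $\mathfrak{P}$, whatever eigenfunction is chosen for each $k$. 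Taking the minimum with $2$ yields the first inequality.

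For the second inequality, we use Proposition~\ref{prop:weyl}. Raising \eqref{eq:prop:weyl} to the positive power $N/p$, which preserves order, gives $C_{\mathcal{W}}^{N/p} \leq |\Sigma|\,\lambda_m^{N/p}(p;\Sigma)/m$. Dividing by $|B_1|\lambda_1^{N/p}(p;B_1)>0$ gives the comparison of the second entries. Since $t\mapsto\min\{2,t\}$ is nondecreasing, the inequality between the minima follows.

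No step is a genuine obstacle: all the analytic content sits in Theorem~\ref{thm:weyl} and Proposition~\ref{prop:weyl}. The only points needing care are these:
\begin{enumerate}[label=(\roman*)]
\item Proposition~\ref{prop:pleij1} holds for every $k$ and every choice of $\varphi_k$, so the $\limsup$ is legitimately bounded.
\item The positivity of $\lambda_1(p;B_1)$ and of the exponent $N/p$ justifies the algebraic manipulations.
\end{enumerate}
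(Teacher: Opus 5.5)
Your proposal is correct and follows essentially the same route as the paper. The paper obtains this theorem by combining \eqref{eq:P1}, Proposition~\ref{prop:pleij1} with the Faber--Krahn choice \eqref{eq:FK}, the Weyl law of Theorem~\ref{thm:weyl}, and Proposition~\ref{prop:weyl}, which is exactly the chain you spell out.
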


\begin{corollary}\label{cor:m=1}
Let $m=1$. 
Let $\Lambda$ and $\Sigma$ in Theorem~\ref{theorem:pleijel-general} be such that $\Sigma$ has inradius $1$. 
Then $\lambda_1(p;\Sigma) \leq \lambda_1(p;B_1)$, and hence
\begin{equation}\label{eq:upperb-general2}
\mathfrak{P}(p;\Omega) 
\leq 
\min\left\{
2,
\frac{|\Sigma|}{|B_1|}
\right\}.
\end{equation}
Noting that $\delta(\Lambda) := |B_1|/|\Sigma|$ is the sphere packing density of $\Lambda$, we use its best values in low dimensions (see, e.g., \cite[Chapter~I, Table~1.2]{conwaysloane}) to get from \eqref{eq:upperb-general2} that
\begin{equation}\label{eq:upperb-general4}
\mathfrak{P}(p;\Omega) 
\leq 
\begin{cases}
\frac{2\sqrt{3}}{\pi} = 1.10265... &\text{for}~ N=2 ~\text{(hexagonal lattice)},\\
\frac{3\sqrt{2}}{\pi} = 1.35047... &\text{for}~ N=3 ~\text{(face-centered cubic lattice)},\\
\frac{16}{\pi^2} = 1.62113... &\text{for}~ N=4 ~\text{(checkerboard lattice $D_4$)}.
\end{cases}
\end{equation}
\end{corollary}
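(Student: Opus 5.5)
The corollary reduces to two claims. The first is the monotonicity $\lambda_1(p;\Sigma)\le\lambda_1(p;B_1)$. The second is then a direct substitution into Theorem~\ref{theorem:pleijel-general} with $m=1$, followed by a table lookup.

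For the first claim we use the hypothesis that $\Sigma$ has inradius $1$. Hence $\Sigma$ contains an open ball $B$ of radius $1$, that is, $B = x_0 + B_1 \subset \Sigma$. Extension by zero gives an isometric embedding $W_0^{1,p}(B)\hookrightarrow W_0^{1,p}(\Sigma)$ that preserves the $L^p$ norm. It therefore maps $\mathcal{S}(B)$ into $\mathcal{S}(\Sigma)$ and preserves $E$. The first cogenus eigenvalue is just the infimum of $E$ over $\mathcal{S}$: the sets $\Sigma_1^{DR}$ consist of pairs $\{\pm u\}$, and we may use Lemma~\ref{lem:equil}. So
\[
\lambda_1(p;\Sigma)=\inf_{u\in\mathcal{S}(\Sigma)}E(u)\le \inf_{u\in\mathcal{S}(B)}E(u)=\lambda_1(p;B)=\lambda_1(p;B_1).
\]
The last equality holds because the problem \eqref{eq:D} is invariant under translations. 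Strictly speaking, the inradius is a supremum, so if it is not attained we take balls of radius $r<1$. Scaling gives $\lambda_1(p;B_r)=r^{-p}\lambda_1(p;B_1)$, and letting $r\to1$ yields the same inequality. For a bounded Lipschitz tile the inradius is in fact attained by compactness, so this step is only a safety remark.

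Next we set $m=1$ in \eqref{eq:upperb-general}. The second entry of the minimum becomes $\frac{|\Sigma|}{|B_1|}\bigl(\lambda_1(p;\Sigma)/\lambda_1(p;B_1)\bigr)^{N/p}$. The ratio inside is at most $1$, so this entry is at most $|\Sigma|/|B_1|$, which gives \eqref{eq:upperb-general2}.

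For \eqref{eq:upperb-general4} we observe the following. If $\Sigma$ is a fundamental domain of $\Lambda$ with inradius $1$, then translates of the unit ball by $\Lambda$ form a packing, and $|\Sigma|$ equals the covolume of $\Lambda$. Hence $|B_1|/|\Sigma|=\delta(\Lambda)$ and the bound reads $\mathfrak{P}\le 1/\delta(\Lambda)$.

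It remains to choose the Voronoi cell of each optimal lattice, scaled so that the minimal distance is $2$. A Voronoi cell is a convex polytope, hence a Lipschitz domain and a fundamental domain, and its inradius is half the minimal distance, namely $1$. The known densities are $\pi/(2\sqrt3)$ for the hexagonal lattice, $\pi/(3\sqrt2)$ for the fcc lattice, and $\pi^2/16$ for $D_4$ (see \cite{conwaysloane}). Their reciprocals are below $2$ and give the stated constants.

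The only point requiring care is matching the inradius of the chosen $\Sigma$ with the packing radius. This is why we use Voronoi cells: for them the inradius is exactly half the minimal lattice distance, so that identity is immediate.
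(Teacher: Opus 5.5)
Your proposal is correct and follows the paper's route. You derive $\lambda_1(p;\Sigma)\le\lambda_1(p;B_1)$ by domain monotonicity from a unit ball inside $\Sigma$, substitute into Theorem~\ref{theorem:pleijel-general} with $m=1$, and then use the packing densities of the hexagonal, fcc and $D_4$ lattices, realized by Voronoi cells with minimal distance $2$ (the regular hexagon and rhombic dodecahedron also appear in the paper's proof of Proposition~\ref{prop:weyl2}).
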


The available results on the sphere packing density of lattices in higher dimensions $N \geq 5$ (see, e.g., \cite[Chapter~I, Table~1.2]{conwaysloane}) give $\delta^{-1}(\Lambda) > 2$, and hence \eqref{eq:upperb-general2} reduces to the estimate \eqref{eq:P1}. 
Since $\lambda_1^{1/p}(p;\Sigma)\to 1$ and $\lambda_1^{1/p}(p;B_1) \to 1$ as $p \to +\infty$ by \cite{JLM}, the upper bound \eqref{eq:upperb-general} with $m=1$ has no significant advantage over the upper bound \eqref{eq:upperb-general2} for sufficiently large $p$. 
Moreover, we did not find a reasonable improvement of \eqref{eq:upperb-general2} for sufficiently large $p$ by taking $m \in \{2,\dots,N+1\}$ in \eqref{eq:upperb-general}. 

Nonetheless, we are able to identify two regimes when $\mathfrak{P}(p;\Omega) < 1$. 
\begin{proposition}\label{prop:continuity}
For every $\varepsilon>0$, there exists $\sigma>0$ such that 
$$
\mathfrak{P}(p;\Omega)
\leq 
\min\left\{
2,
\frac{(2\pi)^N}{|B_1|^2 j_{\frac{N}{2}-1,1}^N} + \varepsilon
\right\}
\quad \text{for any}~ p \in (2-\sigma,2+\sigma). 
$$
In particular, there exist $p_* \in [1,2)$ and $p^*>2$ such that 
$$
\mathfrak{P}(p;\Omega) < 1 
\quad \text{for any}~ p \in (p_*,p^*). 
$$
\end{proposition}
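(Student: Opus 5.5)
The plan is to start from the first inequality in \eqref{eq:upperb-general} of Theorem~\ref{theorem:pleijel-general}, namely
$$
\mathfrak{P}(p;\Omega) \le \min\left\{2, \frac{1}{|B_1|}\frac{C_{\mathcal{W}}^{N/p}(p,N)}{\lambda_1^{N/p}(p;B_1)}\right\},
$$
and to show that the function
$$
f(p) := \frac{C_{\mathcal{W}}^{N/p}(p,N)}{|B_1|\,\lambda_1^{N/p}(p;B_1)}
$$
is upper semicontinuous at $p=2$ with the value $f(2) = (2\pi)^N/(|B_1|^2 j_{\frac N2-1,1}^N)$. The value at $p=2$ follows from \eqref{eq:weylp=2} together with the classical identity $\lambda_1(2;B_1) = j_{\frac N2-1,1}^2$: we get $C_{\mathcal{W}}^{N/2} = (2\pi)^N/|B_1|$, which gives exactly the Pleijel constant in \eqref{eq:P}. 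For the denominator, $p \mapsto \lambda_1(p;B_1)$ is continuous by Proposition~\ref{prop:continuity-eigenvalues} with $k=1$ and $\Omega = B_1$, and it is positive. Hence $p\mapsto |B_1|\lambda_1^{N/p}(p;B_1)$ is continuous and positive near $p=2$.

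The key step, and the expected obstacle, is controlling $C_{\mathcal{W}}(p,N)$ from above near $p=2$, because continuity of the Weyl constant in $p$ is not available. Here we use Proposition~\ref{prop:weyl} with $\Sigma$ the unit cube $(0,1)^N$, which is a Lipschitz fundamental domain of $\mathbb{Z}^N$. This gives
$$
C_{\mathcal{W}}(p,N) \le \frac{\lambda_m(p;\Sigma)}{m^{p/N}} \quad \text{for every } m \in \mathbb{N}.
$$
Fix $\varepsilon>0$. By Theorem~\ref{thm:weyl} at $p=2$ (or the classical Weyl law), we can choose $m$ so large that
$$
\frac{\lambda_m^{N/2}(2;\Sigma)}{m\,|B_1|\,\lambda_1^{N/2}(2;B_1)} < f(2) + \frac{\varepsilon}{2}.
$$
With this $m$ now fixed, Proposition~\ref{prop:continuity-eigenvalues} applied with $k=m$ and $\Omega=\Sigma$ shows that
$$
g_m(p) := \frac{\lambda_m^{N/p}(p;\Sigma)}{m\,|B_1|\,\lambda_1^{N/p}(p;B_1)}
$$
is continuous in $p$; the exponent $N/p$ and the power $m^{p/N}$ are also continuous. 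So there is $\sigma>0$ such that $g_m(p) < f(2)+\varepsilon$ for all $|p-2|<\sigma$. Since $f(p) \le g_m(p)$, which is precisely the second inequality in \eqref{eq:upperb-general}, the first claim follows. The essential point is the order of quantifiers: $m$ is chosen first, at $p=2$, and only then is $\sigma$ chosen.

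For the second claim, the Pleijel constant $f(2)$ is strictly less than $1$ by \eqref{eq:P}. Taking $\varepsilon = (1-f(2))/2$ in the first claim yields a neighbourhood $(2-\sigma,2+\sigma)$ on which $\mathfrak{P}(p;\Omega)<1$. We then define $p_*$ and $p^*$ as the endpoints of the maximal interval containing $2$ on which the bound $\min\{2,g_m(p)\}<1$ holds, truncating at $p_*\ge 1$.
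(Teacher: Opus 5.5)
Your proposal is correct and follows essentially the paper's argument: choose $m$ large at $p=2$ via the Weyl law, then use the continuity of $p\mapsto\lambda_m(p;\Sigma)$ and $p\mapsto\lambda_1(p;B_1)$ in the second bound of \eqref{eq:upperb-general} to obtain $\sigma$. The paper allows any Lipschitz fundamental domain where you take the unit cube, and for the final claim it suffices to set $p_*=\max\{1,2-\sigma\}$ and $p^*=2+\sigma$ (your ``maximal interval'' could a priori have $p^*=+\infty$).
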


\begin{proposition}\label{prop:weyl2}
Let $m=N+1$ in \eqref{eq:upperb-general}. 
Then 
\begin{equation}\label{eq:upperb-general6}
	\limsup_{p \to 1} \mathfrak{P}(p;\Omega) 
	\leq 
	\begin{cases}
	\frac{2\sqrt{3}}{3\pi} \left(\frac{3.15429...}{2}\right)^2 
		=
		0.91424... &\text{for}~ N=2,\\
	\frac{3\sqrt{2}}{4\pi} \left(\frac{4.2644}{3}\right)^3 = 0.96969... &\text{for}~ N=3.
	\end{cases}
\end{equation}
\end{proposition}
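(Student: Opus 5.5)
The plan is to apply the second inequality in \eqref{eq:upperb-general} of Theorem~\ref{theorem:pleijel-general} with $m=N+1$ and a well-chosen lattice tile $\Sigma$, and then let $p \to 1$. For $N=2$ we take $\Sigma$ a regular hexagon (hexagonal lattice). For $N=3$ we take $\Sigma$ the rhombic dodecahedron (Voronoi cell of the face-centered cubic lattice). In both cases $\Sigma$ is normalized to have inradius $1$, so that $|\Sigma|/|B_1| = \delta^{-1}(\Lambda)$. These are exactly the lattices of Corollary~\ref{cor:m=1}, which explains the prefactors $\frac{2\sqrt3}{3\pi} = \frac{1}{3}\cdot\frac{2\sqrt3}{\pi}$ and $\frac{3\sqrt2}{4\pi}=\frac14\cdot\frac{3\sqrt2}{\pi}$, where the division by $m=N+1$ is visible. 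It then remains to show that
\begin{equation}
\limsup_{p\to1}\frac{\lambda_{N+1}^{N/p}(p;\Sigma)}{\lambda_1^{N/p}(p;B_1)} \le \left(\frac{c_N}{N}\right)^N,
\end{equation}
with $c_2 = 3.15429\ldots$ and $c_3 = 4.2644$.

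For the denominator, we use the known limit $\lambda_1(p;B_1)\to h(B_1) = N$ as $p\to1$, the Cheeger constant of the unit ball, since $|\partial B_1|/|B_1|=N$. For the numerator, we do not need convergence, only an upper bound on $\limsup_{p\to1}\lambda_{N+1}(p;\Sigma)$. This should come from the cogenus structure via Lemma~\ref{lem:equil}. We exhibit an explicit odd continuous map $h:\mathbb{S}^{N}\to\mathcal{S}(\Sigma)$ and estimate $\max E$ on its image.

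The natural construction is the one announced in the abstract, $\lambda_{N+1}(p;B)\le\lambda_\ominus(p)$, adapted to $\Sigma$. Since $\Sigma$ is centrally symmetric, for every direction $\theta\in\mathbb{S}^{N-1}$ we split $\Sigma$ by the hyperplane through its center orthogonal to $\theta$ into two congruent halves $\Sigma_\theta^\pm$. On each half we place first eigenfunctions (or near-minimizers of the Rayleigh quotient) $u_\theta^\pm$ with $u_\theta^- = -u_\theta^+\circ(-\mathrm{id})$. We then combine them as $a u_\theta^+ + b u_\theta^-$ with $(a,b)$ running over a circle, so as to obtain an odd map from an $N$-sphere built from $\theta$ and the coefficients. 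Each element of the image then has energy at most $\max_\theta\lambda_1(p;\Sigma_\theta^\pm)$, because the $u_\theta^\pm$ have disjoint supports. This gives
\begin{equation}
\limsup_{p\to1}\lambda_{N+1}(p;\Sigma)\le \max_\theta h(\Sigma_\theta^+),
\end{equation}
using upper semicontinuity of $p\mapsto\lambda_1(p;D)$ at $p=1$ (test with smooth approximants of Cheeger sets) and $\lambda_1(p;D)\to h(D)$. The constants $c_N$ should then be the resulting Cheeger-constant bounds for half-hexagons and half-dodecahedra, maximized over $\theta$ (or a computable upper bound for them). This is consistent with $c_N/N$ being the ratio to $h(B_1)=N$.

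The main obstacle is the topological step. We need a genuinely continuous odd parametrization by $\mathbb{S}^N$, compatible with the $\theta\mapsto-\theta$ identification, which swaps the halves. The map $(\theta,a,b)$ with $a^2+b^2=1$ is not literally a sphere. We would try $h(x)=\text{normalize}\bigl(|x|\, v_{x/|x|} + x_{N+1} w\bigr)$, where $x=(x',x_{N+1})$, $v_\theta$ is the antisymmetric two-bump function, and $w$ is an even function such as the first eigenfunction of $\Sigma$. This requires the extra estimate $E(\alpha v_\theta+\beta w)\le\max\{\lambda_1(\Sigma_\theta^\pm)\}$ along the interpolation, which is the delicate nonlinear point. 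A cruder bound is acceptable here because in the limit $p\to1$ only Cheeger constants enter.

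A secondary obstacle is the numerical evaluation of $\max_\theta h(\Sigma_\theta^+)$, which yields $3.15429\ldots$ and $4.2644$. For $N=2$ this can be done by the known algorithm for Cheeger sets of convex polygons. For $N=3$ it only requires an admissible test subset, which explains why the three-dimensional constant is given only approximately.
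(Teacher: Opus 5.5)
Your framework matches the paper's: the second inequality in Theorem~\ref{theorem:pleijel-general} with $m=N+1$, hexagonal and face-centred cubic tiles of inradius $1$, and $\lambda_1(p;B_1)\to h(B_1)=N$. However, the step that carries the proof is left open, and the repair you sketch does not work.

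\textbf{The odd map from $\mathbb{S}^N$.} You need an odd continuous map $\mathbb{S}^N\to\mathcal{S}$ whose image has energy at most a half-domain eigenvalue, and you propose the linear interpolation $|x|\,v_{x/|x|}+x_{N+1}w$. For $p\neq 2$ you have no inequality bounding the Rayleigh quotient of $\alpha v_\theta+\beta w$ by $\max\{E(v_\theta),E(w)\}$. Use that $v_\theta$ is odd and $w$ is even under $x\mapsto -x$. For $p<2$ the Clarkson-type estimates then only give $E(\alpha v_\theta+\beta w)\le |\alpha|^pE(v_\theta)+|\beta|^pE(w)$ against $\int|\alpha v_\theta+\beta w|^p\ge\int\max\{|\alpha v_\theta|^p,|\beta w|^p\}$. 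That is a loss of a factor up to $2$. In the limit $p\to1$ the denominator becomes exactly $\int\max\{|\alpha v_\theta|,|\beta w|\}$, since $\frac12(|s+t|+|s-t|)=\max\{|s|,|t|\}$. Such a multiplicative loss survives the limit $p\to1$, because it multiplies the Cheeger constant. So the claim that a cruder bound is acceptable is not right.

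The missing idea is the one in Lemma~\ref{lem:multiplicity}: never interpolate linearly between a sign-changing function and a positive one.
\begin{enumerate}
\item Move along $\alpha(s)v_x+\beta(s)v_{-x}$, $|\alpha(s)|^p+|\beta(s)|^p=1$, from the odd function $(v_x-v_{-x})/2^{1/p}$ to the nonnegative function $(v_x+v_{-x})/2^{1/p}$. By disjointness of supports the energy stays equal to $\lambda_\ominus(p)$.
\item Join this nonnegative function to $\varphi_1$ by $\bigl((1-s)H(x,1)^p+s\varphi_1^p\bigr)^{1/p}$. Its energy is at most $(1-s)\lambda_\ominus(p)+s\lambda_1(p;B_1)$ by the D\'iaz--Sa\'a hidden convexity.
\item Place this path along the meridians of the upper hemisphere of $\mathbb{S}^N$ and extend oddly. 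This yields $\lambda_{N+1}(p;B_1)\le\lambda_\ominus(p)$.
\end{enumerate}

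\textbf{The constants are misattributed.} The number $3.15429\ldots$ is the Cheeger constant of the half-disk. The number $4.2644$ is an upper bound for the Cheeger constant of the three-dimensional half-ball $B_1^+$, obtained in Lemma~\ref{lem:cheeger} from the competitor $\mathrm{Int}(K_\rho+\overline{B_\rho})$ with $\rho=0.2956$. Neither is a value of $\max_\theta h(\Sigma_\theta^+)$ for half-hexagons or half-rhombic dodecahedra. Those are other numbers, at most $h(B_1^+)$ since $B_1^\theta\subset\Sigma_\theta^+$, and you neither compute them nor relate them to the stated ones.

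The paper does not split $\Sigma$ at all. Since $\Sigma$ has inradius $1$, extension by zero gives $\lambda_{N+1}(p;\Sigma)\le\lambda_{N+1}(p;B_1)$. On the ball all half-balls are congruent, and $x\mapsto v_x$ is continuous by rotation invariance. On $\Sigma$ you would additionally need $W_0^{1,p}$-continuity of $\theta\mapsto u_\theta^+$ under a moving cut. Then $\lambda_\ominus(p)=\lambda_1(p;B_1^+)\to h(B_1^+)$ gives exactly the stated bounds. Your route on $\Sigma$ could be made consistent with these numbers via the inclusion $B_1^\theta\subset\Sigma_\theta^+$. That only helps once the odd $\mathbb{S}^N$-map with the correct energy bound has actually been built.
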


We see that, for the $p$-Laplacian, the quantitative side of the Pleijel analysis significantly relies on the value of the Weyl constant $C_{\mathcal{W}}$. 
Without having either an \textit{explicit} value of $C_{\mathcal{W}}$, or at least an \textit{explicit} value of the estimate \eqref{eq:prop:weyl} for sufficiently large $m$, we are not able to derive even the Courant nodal domain bound $\nu(\varphi_k) \leq k$ for all $p \neq 2$ as $k \to +\infty$. 
The exact value of $C_{\mathcal{W}}$ remains a deep problem related to the structure of the spectrum of the $p$-Laplacian in the $N$-dimensional cube. 
We refer to \cite[Section~6]{F} for a conjectured value.

\medskip
The rest of this work is structured as follows.
In Section~\ref{sec:equivalence}, we prove the properties of the regularized cogenus stated in 
Lemmas~\ref{lem:indices} and \ref{lem:equil}. 
In Section~\ref{sec:weyl}, we prove Theorem~\ref{thm:weyl} and Proposition~\ref{prop:weyl}. 
Section~\ref{sec:proof} is devoted to the proofs of the remaining assertions -- Propositions~\ref{prop:continuity} and \ref{prop:weyl2}.
Counterparts of the main results for other types of minimax eigenvalues are discussed in Section~\ref{sec:krasnosel}.

\section{Properties of the regularized cogenus}\label{sec:equivalence}

In this section, we prove Lemmas~\ref{lem:indices} and \ref{lem:equil}.

\begin{proof}[Proof of Lemma~\ref{lem:indices}]
In what follows, $A$ is a nonempty, symmetric, compact set such that $0 \not\in A$. 
We start by proving the inequalities
\begin{equation}\label{eq:FCG1}
\gamma_F(A)\le \gamma_C(A)\le \gamma_G(A).
\end{equation}
If, for some $m \in \mathbb{N}$, there exists an odd map $f: \mathbb{S}^{m-1}\to A$, then $f$ can be seen as $f:\mathbb{S}^{m-1} \to U$ for any $U \in \mathcal{N}(A)$. 
Taking the supremum among such $m$, we arrive at $\gamma_F(A)\le \gamma_C(A)$. 

Since $A$ is compact, we have $n=\gamma_G(A)<+\infty$, see, e.g., \cite[Proposition~7.5]{rabinowitz}. 
Let $f:A\to\mathbb{S}^{n-1}$ be continuous and odd, and we can regard $f$ as $f:A \to \mathbb{R}^n$. 
By the Tietze extension theorem (see, e.g., \cite{dugunji}), $f$ extends to a continuous map $F:X\to\R^n$. Define
$F_{\mathrm{\mathrm{odd}}}(u)=(F(u)-F(-u))/2$, so that $F_{\mathrm{odd}}$ is odd and $F_{\mathrm{odd}}=f$ on $A$. 
Since
$|F_{\mathrm{odd}}|=1$ on $A$, the set $V=\{u\in X:~F_{\mathrm{odd}}(u)\ne 0\}$ is an open symmetric neighborhood of $A$, i.e., $V \in \mathcal{N}(A)$. 
The map $u\mapsto F_{\mathrm{odd}}(u)/|F_{\mathrm{odd}}(u)|$ is a continuous odd map from $V$ to $\mathbb{S}^{n-1}$. 
Therefore, by the Borsuk--Ulam theorem, there is no continuous odd map $\mathbb{S}^m\to V$ for $m \geq  n$, which yields $\gamma_F(V)\le n$. 
Recalling that $V \in \mathcal{N}(A)$, we get from the definition \eqref{eq:gammac2} of $\gamma_C(A)$ that 
$$
\gamma_C(A) \leq \gamma_F(V) \leq n = \gamma_G(A).
$$
This finishes the proof of \eqref{eq:FCG1}. 

Let us now justify that $\gamma_C$ satisfies the assumptions \ref{I1}--\ref{I4}. 

\ref{I1} Take any $u\in A$ and define a map $f: \mathbb{S}^0=\{-1,1\}\to A$ as $f(1)=u$ and $f(-1)=-u$. 
Clearly, $f$ is continuous and odd, and hence $\gamma_F(A)\ge1$.
In view of \eqref{eq:FCG1}, $\gamma_C(A) \geq 1$ and it is finite.

\ref{I2} Let $m = \gamma_C(A)$, and let $V$ be an open symmetric neighborhood of $\pi(A)$ in $Y\setminus\{0\}$. 
Similarly to the first part of the proof, the Dugundji extension theorem \cite{dugunji} guarantees that $\pi$ can be extended to a continuous odd map $P_{\mathrm{odd}}:X\to Y$. 
(Here we used the assumption that $X$ is metrizable and $Y$ is a locally convex topological vector space.) 
Since $P_{\mathrm{odd}}(A)\subset V$ and $V$ is open, there exists an open symmetric neighborhood $W$ of $A$ in $X\setminus\{0\}$ such that $P_{\mathrm{odd}}(W)\subset V$. 
By $m = \gamma_C(A)$, there is a continuous odd map
$h:\mathbb{S}^{m-1}\to W$. Then $P_{\mathrm{odd}}\circ h:\mathbb{S}^{m-1}\to V$ is continuous and odd. 
Since $V$ was arbitrary, we get $\gamma_C(\pi(A))\ge m$. 

\ref{I3} Let $m=\gamma_C(A)$. 
By the definition \eqref{eq:gammac}, there exists an open symmetric neighborhood $U$ of $A$ in
$X\setminus\{0\}$ such that there is no continuous odd map $\mathbb{S}^m\to U$.
If $\widehat A\subset U$ is nonempty, symmetric, and compact, then $U$ is
also an admissible neighborhood of $\widehat A$. Hence, 
$\gamma_C(\widehat A)\le m=\gamma_C(A)$.

\ref{I4} 
If $\dim X=n$, then for $S_X := \{u \in X : \|u\|_X = 1\}$ we have $\gamma_F(S_X) = n$ (by the Borsuk--Ulam theorem) and $\gamma_G(S_X) = n$ (see, e.g., \cite[Proposition~7.7]{rabinowitz}), so that $\gamma_C(S_X)=n$ follows from \eqref{eq:FCG1}.
\end{proof}

\begin{proof}[Proof of Lemma~\ref{lem:equil}]
First, we prove that $\lambda_k^F(p;\Omega)= \lambda_k^{DR}(p;\Omega)$.
Let us show that $\lambda_k^F(p;\Omega)\leq\lambda_k^{DR}(p;\Omega)$. 
Take any $A\in\Sigma_k^{DR}$. By the definition \eqref{highereigenvalues2}, there exists a continuous odd map
$h:\mathbb{S}^{k-1}\to\mathcal{S}(\Omega)$ such that $A=h(\mathbb{S}^{k-1})$. Since
$\mathbb{S}^{k-1}$ is compact, the set $A$ is compact. 
Moreover, the oddness of $h$ implies that $A$ is symmetric. 
Regarding $h$ as a map from $\mathbb{S}^{k-1}$ onto its image $A$, the definition \eqref{eq:cogen} of the cogenus gives $\gamma_F(A)\geq k$. 
Thus, we have $A\in\Sigma_k^F$, and hence
$$
    \lambda_k^F(p;\Omega)
    \leq
    \max_{u\in A}E(u).
$$
Taking the infimum over $A\in\Sigma_k^{DR}$, we obtain
$\lambda_k^F(p;\Omega)\leq\lambda_k^{DR}(p;\Omega)$.

We now prove the reverse inequality. Let $A\in\Sigma_k^F$. Since $m:=\gamma_F(A)\geq k$, there exists a continuous odd map $g:\mathbb{S}^{m-1}\to A$. 
Consider a map
\begin{equation}\label{eq:i}
    \iota:\mathbb{S}^{k-1} \to \mathbb{S}^{m-1},
    \qquad
    \iota(x_1,\ldots,x_k)
    =
    (x_1,\ldots,x_k,0,\ldots,0).
\end{equation}
Since $\iota$ is continuous and odd, so is the map $h:=g\circ\iota:\mathbb{S}^{k-1}\to A \subset \mathcal{S}(\Omega)$. Therefore, the set $B:=h(\mathbb{S}^{k-1})$ belongs to
$\Sigma_k^{DR}$. Since $B\subset A$, it follows that
$$
    \lambda_k^{DR}(p;\Omega)
    \leq
    \max_{u\in B}E(u)
    \leq
    \max_{u\in A}E(u).
$$
Taking the infimum over $A\in\Sigma_k^F$, we conclude that
$\lambda_k^{DR}(p;\Omega)\leq\lambda_k^F(p;\Omega)$. 
Therefore, we deduce that $\lambda_k^F(p;\Omega) = \lambda_k^{DR}(p;\Omega)$.

Second, we prove that $\lambda_k^C(p;\Omega)=\lambda_k^{F}(p;\Omega)$. 
The inequality $\lambda_k^C(p;\Omega) \leq \lambda_k^{F}(p;\Omega)$ follows from \eqref{eq:FCG0}. 
To show the reverse inequality, for any $\varepsilon>0$ let $A\subset\cS(\Omega)$ be a compact symmetric set such that  $m := \gamma_C(A)\ge k$ and $\max_{u \in A} E(u) \leq \lambda_k^C(p;\Omega)+\varepsilon$. 
The set 
$$
U_\varepsilon=\left\{u\in W_0^{1,p}(\Omega) \setminus \{0\}:~ 
\frac{E(u)}{\int_\Omega |u|^p\,dx}<\lambda_k^C(p;\Omega)+2\varepsilon
\right\}
$$ 
is an open symmetric neighborhood of $A$. 
By the definition of $\gamma_C(A)$, there exists a continuous odd map $h:\mathbb{S}^{m-1}\to U_\varepsilon$. 
Normalizing $h$ as $\widehat{h}(z) = h(z)/(\int_\Omega |h(z)|^p \,dx)^{1/p}$, we get $\widehat{h}(\mathbb{S}^{m-1}) \in \Sigma_m^{DR}$. 
Therefore, we obtain
$$
\lambda_k^{F}(p;\Omega)
=
\lambda_k^{DR}(p;\Omega)
\leq
\lambda_m^{DR}(p;\Omega)
\leq
\max_{u \in \widehat{h}(\mathbb{S}^{m-1})} E(u) 
=
\max_{u \in {h}(\mathbb{S}^{m-1})} \frac{E(u)}{\int_\Omega |u|^p\,dx} 
\leq 
\lambda_k^C(p;\Omega)+2\varepsilon.
$$ 
Letting $\varepsilon \to 0$, we arrive at the inequality
$\lambda_k^{F}(p;\Omega)\le\lambda_k^C(p;\Omega)$, which finishes the proof. 
\end{proof}

Recall that, in view of Lemmas~\ref{lem:indices} and \ref{lem:equil}, the results of \cite{degiovannimarzocchi} imply the validity of Proposition~\ref{prop:continuity-eigenvalues} on the continuity of the cogenus eigenvalues with respect to $p$.

\section{Weyl law}\label{sec:weyl}

The aim of this section is to prove Theorem~\ref{thm:weyl} and Proposition~\ref{prop:weyl}. 
Let us introduce some useful notation. 
Following \cite{F,mazur}, for $\lambda>0$, let 
$$
M_\lambda^0(\Omega)
=
\left\{
u \in \mathcal{S}(\Omega):~ E(u) < \lambda 
\right\},
$$
and denote by $N_\Omega^0(\lambda)$ the counting function for the cogenus eigenvalues $\{\lambda_k(p;\Omega)\}$, that is,
$$
N_\Omega^0(\lambda) =
\# \{k \in \mathbb{N}:~ \lambda_k(p;\Omega) < \lambda\}.
$$
Let us provide three main properties of $N_\Omega^0(\lambda)$ needed to establish Theorem~\ref{thm:weyl}. 

\begin{lemma}\label{lem:equality}{(Characterization)}
Let $\lambda>0$. Then 
$$
N_\Omega^0(\lambda) = \gamma_F(M_\lambda^0(\Omega)).
$$
\end{lemma}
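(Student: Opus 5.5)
We prove the two inequalities $N_\Omega^0(\lambda)\ge \gamma_F(M_\lambda^0(\Omega))$ and $N_\Omega^0(\lambda)\le \gamma_F(M_\lambda^0(\Omega))$ separately. Throughout, we use the Dr\'abek--Robinson form $\lambda_k=\lambda_k^{DR}(p;\Omega)$ from Lemma~\ref{lem:equil}, together with the monotonicity $\lambda_k\le\lambda_{k+1}$. Monotonicity follows by restricting an odd map $\mathbb{S}^{k}\to\mathcal{S}(\Omega)$ to the equator $\iota(\mathbb{S}^{k-1})$, exactly as in \eqref{eq:i}. Consequently $N_\Omega^0(\lambda)=\sup\{k:\lambda_k<\lambda\}$ (or $0$ if $\lambda_1\ge\lambda$), and the set $\{k:\lambda_k<\lambda\}$ is an initial segment of $\mathbb{N}$.

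\emph{Step 1: $\gamma_F(M_\lambda^0)\ge N_\Omega^0(\lambda)$.} Let $k$ satisfy $\lambda_k<\lambda$. By the definition \eqref{highereigenvalues2}, there is a continuous odd map $h:\mathbb{S}^{k-1}\to\mathcal{S}(\Omega)$ with $\max_{\mathbb{S}^{k-1}} E\circ h<\lambda$. The infimum is strict, so such an $h$ exists with the maximum strictly below $\lambda$. Hence $h$ maps into $M_\lambda^0(\Omega)$, and the definition \eqref{eq:cogen} gives $\gamma_F(M_\lambda^0)\ge k$. Taking the supremum over admissible $k$ yields the inequality, including the case $N_\Omega^0(\lambda)=+\infty$.

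\emph{Step 2: $\gamma_F(M_\lambda^0)\le N_\Omega^0(\lambda)$.} Suppose $m\in\mathbb{N}$ admits a continuous odd map $g:\mathbb{S}^{m-1}\to M_\lambda^0(\Omega)$. Then $A=g(\mathbb{S}^{m-1})\in\Sigma_m^{DR}$. Since $A$ is compact and $E$ is continuous on $\Wo$, the maximum of $E$ on $A$ is attained, and it is strictly less than $\lambda$ because every point of $A$ satisfies $E<\lambda$. Therefore $\lambda_m\le\max_A E<\lambda$, so $m\le N_\Omega^0(\lambda)$. Taking the supremum over such $m$ gives the claim.

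Combining the two steps gives equality, valid also when either side is infinite. The one point requiring care is in Step 2: the strict inequality $\lambda_m<\lambda$ relies on compactness of $g(\mathbb{S}^{m-1})$, which holds because $g$ is continuous and $\mathbb{S}^{m-1}$ is compact. This is exactly why the open sublevel set $M_\lambda^0$ with strict inequality $E<\lambda$ matches the strict count in $N_\Omega^0$. Beyond this, the argument is routine, since both sides are suprema over the same kind of odd spherical images.
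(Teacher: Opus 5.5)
Your proof is correct and is essentially the paper's argument. The paper proves the equivalence $\lambda_k(p;\Omega)<\lambda \Leftrightarrow \gamma_F(M_\lambda^0(\Omega))\ge k$ directly, using the equatorial restriction \eqref{eq:i} for each $k\le m$, and reads off the count. You instead split the claim into two inequalities and invoke that same restriction once, to get monotonicity of $\lambda_k$ and hence the initial-segment property; the compactness argument giving the strict inequality $\max_A E<\lambda$ is the same in both.
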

\begin{proof}
We first prove that, for every $k\in\mathbb N$,
\begin{equation}\label{eq:equivalence:subadd}
\lambda_k(p;\Omega)<\lambda
\quad\Longleftrightarrow\quad
\gamma_F(M_\lambda^0(\Omega))\ge k.
\end{equation}
Assume that $\lambda_k(p;\Omega)<\lambda$. 
By the definition \eqref{eq:eigen-general} (with $i=\gamma_F$) of $\lambda_k(p;\Omega)$,
there exists a compact symmetric set $A\subset\mathcal{S}(\Omega)$ such that
$\gamma_F(A)\ge k$ and $\max_{u\in A}E(u)<\lambda$. 
Hence, $A\subset M_\lambda^0(\Omega)$. 
It is clear from the definition \eqref{eq:cogen} that the cogenus is monotone with
respect to set inclusion, which yields 
$$
\gamma_F(M_\lambda^0(\Omega))
\ge \gamma_F(A)
\ge k.
$$
Conversely, assume that
$\gamma_F(M_\lambda^0(\Omega))\ge k$. By the definition of cogenus,
there exist $m \geq  k$ and a continuous odd map $g:\mathbb{S}^{m-1}\to M_\lambda^0(\Omega)$. 
As in the proof of Lemma~\ref{lem:equil}, let $\iota:\mathbb{S}^{k-1}\to\mathbb{S}^{m-1}$ be defined by \eqref{eq:i}. 
Then $f:=g\circ\iota$ is a continuous odd map from $\mathbb{S}^{k-1}$ to $M_\lambda^0(\Omega)$.
Denote $A=f(\mathbb{S}^{k-1})$. 
Then $A\subset M_\lambda^0(\Omega)\subset\mathcal{S}(\Omega)$, and $A$ is compact
and symmetric. Moreover, since $f:\mathbb{S}^{k-1}\to A$ is continuous and odd,
we have $\gamma_F(A)\ge k$. 
That is, $A$ is admissible for the definition  of $\lambda_k(p;\Omega)$, and we have
$$
\lambda_k(p;\Omega) \leq \max_{u\in A}E(u)<\lambda.
$$
This proves the equivalence \eqref{eq:equivalence:subadd}. 

Denoting $m=\gamma_F(M_\lambda^0(\Omega)) \in \mathbb{N} \cup \{+\infty\}$, \eqref{eq:equivalence:subadd} gives
$$
\{k\in\mathbb N:~\lambda_k(p;\Omega)<\lambda\}
=
\{k\in\mathbb N:~k \leq  m\}.
$$
If $m<+\infty$, then the latter set is $\{1,\dots,m\}$, and hence it has
cardinality $m$. If $m=+\infty$, then both sets are infinite. 
Consequently, we get the desired equality:
\begin{equation*}
N_\Omega^0(\lambda) \equiv \#\{k\in\mathbb N:\lambda_k(p;\Omega)<\lambda\}
=
\gamma_F(M_\lambda^0(\Omega)) = m.
\qedhere
\end{equation*}
\end{proof}

\begin{lemma}\label{lem:superadditivity}{(Superadditivity)}
Let $\lambda>0$. 
Let $V$ and $W$ be two disjoint subdomains of $\Omega$. 
Then
\begin{equation}\label{eq:lem:superadd}
N_\Omega^0(\lambda)
\geq
N_V^0(\lambda)
+
N_W^0(\lambda).
\end{equation}
\end{lemma}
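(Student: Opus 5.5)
By Lemma~\ref{lem:equality}, the claim \eqref{eq:lem:superadd} is equivalent to
$$
\gamma_F(M_\lambda^0(\Omega)) \geq \gamma_F(M_\lambda^0(V)) + \gamma_F(M_\lambda^0(W)).
$$
So the plan is to join odd maps from two spheres into a single odd map from a larger sphere. If either term on the right is $+\infty$, the inequality follows from the finite case, because the construction below works for any finite $m \le \gamma_F(M_\lambda^0(V))$ and $n \le \gamma_F(M_\lambda^0(W))$. It therefore suffices to take continuous odd maps
$$
g:\mathbb{S}^{m-1}\to M_\lambda^0(V), \qquad h:\mathbb{S}^{n-1}\to M_\lambda^0(W),
$$
and produce a continuous odd map $\mathbb{S}^{m+n-1}\to M_\lambda^0(\Omega)$.

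Extending functions by zero gives the isometric embeddings $W_0^{1,p}(V),\,W_0^{1,p}(W)\hookrightarrow W_0^{1,p}(\Omega)$, so we regard $g(x)$ and $h(y)$ as elements of $W_0^{1,p}(\Omega)$. Write a point of $\mathbb{S}^{m+n-1}\subset\mathbb{R}^m\times\mathbb{R}^n$ as $z=(x,y)$ with $|x|^2+|y|^2=1$, and define
$$
F(z) = |x|^{2/p}\, g\!\left(\frac{x}{|x|}\right) + |y|^{2/p}\, h\!\left(\frac{y}{|y|}\right),
$$
with the convention that a term is $0$ whenever $x=0$ or $y=0$.

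\emph{Continuity.} The point needing care is where $x\to0$. There $\|g(x/|x|)\|_{W^{1,p}}$ stays bounded, since $g$ is continuous on a compact set, and the coefficient $|x|^{2/p}$ tends to $0$. Hence $F$ is continuous into $W_0^{1,p}(\Omega)$, and the same argument handles $y\to0$.

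\emph{Oddness.} The maps $g,h$ are odd and the coefficients $|x|^{2/p},|y|^{2/p}$ are even, so $F(-z)=-F(z)$.

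\emph{Normalization.} The supports of the two terms lie in the disjoint sets $V$ and $W$, so the $L^p$ norms and the energies add:
$$
\int_\Omega |F(z)|^p\,dx = |x|^2\cdot 1 + |y|^2\cdot 1 = 1,
$$
$$
E(F(z)) = |x|^2\, E\!\left(g\!\left(\tfrac{x}{|x|}\right)\right) + |y|^2\, E\!\left(h\!\left(\tfrac{y}{|y|}\right)\right) < \lambda,
$$
where the strict inequality holds because $|x|^2+|y|^2=1$ and each energy is $<\lambda$. The weights $|x|^{2/p}$ were chosen precisely so that the $p$-th powers become $|x|^2$ and $|y|^2$, which sum to $1$. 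Thus $F(\mathbb{S}^{m+n-1})\subset M_\lambda^0(\Omega)$.

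It follows that $\gamma_F(M_\lambda^0(\Omega))\geq m+n$, and taking suprema over admissible $m,n$ yields \eqref{eq:lem:superadd}. The main obstacle is the continuity of $F$ at the points where $x=0$ or $y=0$, and the bounded-times-vanishing argument above handles it. The disjointness of supports is what makes both the $L^p$ norm and the energy split additively.
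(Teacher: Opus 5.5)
Your proof is correct and follows the paper's route. Via Lemma~\ref{lem:equality} the claim reduces to $\gamma_F(M_\lambda^0(\Omega))\ge\gamma_F(M_\lambda^0(V))+\gamma_F(M_\lambda^0(W))$; the paper simply cites this as \cite[Lemma~3]{F}, while your join map $F(x,y)=|x|^{2/p}g(x/|x|)+|y|^{2/p}h(y/|y|)$ proves it directly, with continuity, oddness, and the additive splitting of the $L^p$ norm and of the energy over disjoint supports all handled correctly. The one case you leave implicit is when a sublevel set is empty (cogenus $0$, e.g.\ $\lambda\le\lambda_1(p;W)$), so no map $h$ exists; there the inequality follows at once from the monotonicity $M_\lambda^0(V)\subset M_\lambda^0(\Omega)$ under zero extension.
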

\begin{proof}
The result is essentially due to \cite[Lemma~3]{F}, which states that 
$$
\gamma_F(M_\lambda^0(\Omega))
\geq
\gamma_F(M_\lambda^0(V))
+
\gamma_F(M_\lambda^0(W)). 
$$
Thanks to Lemma~\ref{lem:equality}, this inequality is equivalent to \eqref{eq:lem:superadd}. 
\end{proof}

\begin{lemma}\label{lem:scaling}{(Scaling)}
Let $\lambda>0$ and $a>0$.
Then 
\begin{equation}\label{eq:scaling}
N_{a\Omega}^0(\lambda)
=
N_{\Omega}^0(a^p\lambda),
\end{equation}
where $a\Omega = \{ax \in \mathbb{R}^N:~ x \in \Omega\}$.
\end{lemma}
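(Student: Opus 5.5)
Via the dilation $T_a u(y)=a^{-N/p}u(y/a)$, an odd isometry of $L^p$ from functions on $\Omega$ to functions on $a\Omega$, I will show that $M^0_\lambda(a\Omega)$ and $M^0_{a^p\lambda}(\Omega)$ correspond bijectively under odd homeomorphisms. The cogenus is invariant under such maps, and Lemma~\ref{lem:equality} turns this into an equality of counting functions.

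\emph{Setup.} For $u\in\Wo$ define
$$
(T_a u)(y)=a^{-N/p}u(y/a),\qquad y\in a\Omega .
$$
Then $T_a$ is a linear bijection from $\Wo$ onto $W_0^{1,p}(a\Omega)$, with inverse $T_{1/a}$, so it is odd. By the change of variables $y=ax$,
$$
\int_{a\Omega}|T_a u|^p\,dy=\int_\Omega|u|^p\,dx,
\qquad
\int_{a\Omega}|\nabla T_a u|^p\,dy=a^{-p}\int_\Omega|\nabla u|^p\,dx .
$$
Hence $T_a$ maps $\mathcal S(\Omega)$ onto $\mathcal S(a\Omega)$, and $E(T_a u)=a^{-p}E(u)$.

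\emph{Correspondence of sublevel sets.} For $u\in\mathcal S(\Omega)$ we have $E(T_a u)<\lambda$ if and only if $E(u)<a^p\lambda$. Therefore
$$
T_a\bigl(M^0_{a^p\lambda}(\Omega)\bigr)=M^0_\lambda(a\Omega).
$$
Since $T_a$ and $T_{1/a}$ are bounded linear maps, both are continuous.

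\emph{Invariance of the cogenus.} Let $g:\mathbb S^{m-1}\to M^0_{a^p\lambda}(\Omega)$ be continuous and odd. Then $T_a\circ g$ is a continuous odd map into $M^0_\lambda(a\Omega)$. Conversely, composing with $T_{1/a}$ carries continuous odd maps into $M^0_\lambda(a\Omega)$ back to continuous odd maps into $M^0_{a^p\lambda}(\Omega)$. The definition \eqref{eq:cogen} then gives
$$
\gamma_F\bigl(M^0_\lambda(a\Omega)\bigr)=\gamma_F\bigl(M^0_{a^p\lambda}(\Omega)\bigr),
$$
and this holds also when both sides equal $+\infty$.

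\emph{Conclusion.} Applying Lemma~\ref{lem:equality} on $a\Omega$ (a bounded Lipschitz domain) and on $\Omega$ converts this equality into \eqref{eq:scaling}.

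There is no real obstacle. The only care needed is the power in the normalization factor $a^{-N/p}$ and the direction of the scaling, which is what produces the factor $a^p$ on the right-hand side. As a check, the same computation gives $\lambda_k(p;a\Omega)=a^{-p}\lambda_k(p;\Omega)$, which is equivalent to \eqref{eq:scaling}.
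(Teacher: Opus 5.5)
Your proof is correct and takes the same route as the paper, which says only that the argument of \cite[Proposition~5.3]{mazur} carries over because $\gamma_F$ is stable under odd homeomorphisms; this is precisely your dilation $T_a$ with $T_a\bigl(M^0_{a^p\lambda}(\Omega)\bigr)=M^0_\lambda(a\Omega)$, followed by Lemma~\ref{lem:equality}. Your direct composition argument for the equality of cogenera is slightly cleaner than appealing to \ref{I2}, since \ref{I2} is stated for compact sets while the sublevel sets $M^0_\lambda$ are not compact.
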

\begin{proof}
The proof is evident. For instance, it can be established verbatim as \cite[Proposition~5.3]{mazur}, by noting that $\gamma_F$ is stable under odd homeomorphisms, see the assumption \ref{I2} in Section~\ref{subsec:cogenus}.
\end{proof}

Now we are ready to prove the Weyl law for the cogenus eigenvalues $\{\lambda_k(p;\Omega)\}$. 
\begin{proof}[Proof of Theorem~\ref{thm:weyl}]
The proof is largely based on the results of \cite{F,mazur}, and we provide only missing details.  
Let $K$ be an open unit $N$-dimensional cube. 
Arguing verbatim as in \cite[Lemma~6.1]{mazur} with the help of Lemmas~\ref{lem:superadditivity} and \ref{lem:scaling}, we conclude that the function $\lambda \mapsto \lambda^{-N/p} N_K^0(\lambda)$ tends to $c_0 \in [0,+\infty]$ as $\lambda \to +\infty$. 
By \cite[Remark, p.~1066]{F}, which asserts the existence of $C_i=C_i(p,N)$, $i=1,2$, such that 
\begin{equation}\label{eq:fried10}
C_1 |K| \lambda^{N/p}
\leq
\gamma_F(M_\lambda^0(K))
\leq
C_2 |K| \lambda^{N/p}
\quad \text{as}~ \lambda \to +\infty, 
\end{equation}
and in view of Lemma~\ref{lem:equality}, we obtain $c_0 \in (0,+\infty)$. 
Finally, \cite[Theorem~6.3]{mazur} extends in a standard way the Weyl law from the cube $K$ to a general bounded Lipschitz domain $\Omega$. 
This result does not rely on particular properties of the cohomological index, and hence it remains valid also for the cogenus. 
This completes the proof. 
\end{proof}

\begin{remark}\label{rem:weyl}
In much the same way as in the proof of Theorem~\ref{thm:weyl}, the arguments of \cite{F,mazur} would also translate to the case of the Krasnoselskii genus $\gamma_G$, provided one knows that the corresponding counting function satisfies the superadditivity relation of Lemma~\ref{lem:superadditivity}.  
However, unlike the case of the cohomological index $\gamma_P$ and cogenus $\gamma_F$, this property does not seem to be known for $\gamma_G$. 
This fact highlights that, despite the result of \cite{mazur} on the Weyl law for the cohomological index eigenvalues, the original conjecture of Friedlander \cite{F} on the validity of the Weyl law for the Krasnoselskii genus eigenvalues \textit{remains an open problem}. 
\end{remark}

\subsection{Estimate on the Weyl constant}\label{sec:Cl}
The aim of this subsection is to establish the upper bound on $C_{\mathcal{W}}$ stated in Proposition~\ref{prop:weyl}. 
We start with several definitions (see, e.g., \cite{conwaysloane}) and auxiliary results. 
Let $\Lambda\subset \mathbb{R}^N$ be a full-rank lattice, that is, $\Lambda$ is the image of $\mathbb{Z}^N$ under an invertible linear transformation of $\mathbb{R}^N$. 
Let $\Sigma\subset \mathbb{R}^N$ be a fundamental domain for $\Lambda$, that is,
$$
\mathbb{R}^N
=
\bigcup_{z\in\Lambda} \overline{z+\Sigma},
\quad \text{where}~ 
(z+\Sigma)\cap(z'+\Sigma)=\emptyset
\quad\text{for}~ z\neq z'.
$$
Assume that $\Sigma$ is sufficiently regular, e.g., a Lipschitz domain. 
We also assume, for definiteness, that $\Sigma$ has inradius $1$, so that there exists $a\in\Sigma$ such that $B_1(a)\subset \Sigma$. 
Then the family of balls $\{B_1(a+z):~z\in\Lambda\}$ 
is a lattice sphere packing, and its packing density can be defined as 
$$
\delta(\Lambda)
=
\frac{|B_1|}{|\Sigma|}. 
$$
For $h>0$, we denote $\Sigma_h =h\Sigma$, so that $\Sigma_1=\Sigma$, $\Sigma_h$ has inradius $h$, and
$$
|\Sigma_h|=h^N|\Sigma|,
\qquad
\lambda_m(p;\Sigma_h)=h^{-p}\lambda_m(p;\Sigma)
\quad \text{for any}~ m \in \mathbb{N}. 
$$
Define 
\begin{equation}\label{eq:M}
M_\Omega(h)
=
\sup_{t\in \Sigma_h}
\#
\left\{
z\in\Lambda:~
t+h(z+\Sigma)\subset\Omega
\right\}.
\end{equation}
\begin{lemma}\label{lem:Mh-asymp}
We have 
\begin{equation}\label{eq:Mh-asymp}
M_\Omega(h)
=
\frac{|\Omega|}{h^N|\Sigma|}(1+o(1))
\quad\text{as}~ h \to 0.
\end{equation}
\end{lemma}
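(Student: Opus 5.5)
We prove \eqref{eq:Mh-asymp} by separate upper and lower bounds on $M_\Omega(h)\,h^N|\Sigma|/|\Omega|$. Both rest on the fact that the closed cells $\overline{t+h(z+\Sigma)}$, $z\in\Lambda$, tile $\mathbb{R}^N$ and have pairwise disjoint interiors. Each cell has measure $h^N|\Sigma|$ and diameter $h\,\mathrm{diam}(\Sigma)=:hd$.

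\emph{Upper bound.} Fix any $t\in\Sigma_h$. The cells $t+h(z+\Sigma)$ contained in $\Omega$ are pairwise disjoint open subsets of $\Omega$. Summing their measures gives
$$
\#\{z\in\Lambda:~ t+h(z+\Sigma)\subset\Omega\}\; h^N|\Sigma|\le|\Omega|.
$$
Taking the supremum over $t$ yields $M_\Omega(h)\le |\Omega|/(h^N|\Sigma|)$ for every $h>0$.

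\emph{Lower bound.} Fix any $t\in\Sigma_h$; it suffices to take, say, $t=ha$. Let
$$
\Omega_{hd}=\{x\in\Omega:~\mathrm{dist}(x,\partial\Omega)>hd\}.
$$
Suppose a cell $\overline{t+h(z+\Sigma)}$ meets $\Omega_{hd}$. Then every point of the cell lies within distance $hd$ of a point of $\Omega_{hd}$, so the whole cell lies in $\Omega$. Consequently, the cells contained in $\Omega$ cover $\Omega_{hd}$ up to a null set, since the cell boundaries $h\,\partial\Sigma$ are null sets because $\Sigma$ is Lipschitz. Comparing measures,
$$
M_\Omega(h)\,h^N|\Sigma|\ \ge\ |\Omega_{hd}|.
$$
Now $|\Omega\setminus\Omega_{hd}|\to 0$ as $h\to0$ by monotone convergence: the sets $\Omega_{hd}$ increase to $\Omega$ because $\Omega$ is open. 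Hence $|\Omega_{hd}|=|\Omega|(1+o(1))$.

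Combining the two bounds gives \eqref{eq:Mh-asymp}. No rate is needed, although for Lipschitz $\Omega$ one would even get $|\Omega\setminus\Omega_{hd}|=O(h)$.

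\emph{Main point requiring care.} Since $\Sigma$ need not be convex or star-shaped, one must bound cells through the diameter of $\Sigma$ rather than through its inradius. One must also remember that the closures of the cells, not the open cells themselves, cover $\mathbb{R}^N$, and this is why the null-set property of $\partial\Sigma$ is needed. Beyond this, the argument is elementary.
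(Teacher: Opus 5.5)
Your proof is correct. The upper bound is the same disjointness and measure count as in the paper, but your lower bound follows a different route.

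The paper fixes $R$ with $\Sigma\subset B_R(0)$ and counts only the reference points $t+hz$ that lie in $\Omega_{hR}$, using $t+h(z+\Sigma)\subset B_{hR}(t+hz)\subset\Omega$. For a single fixed $t$, the number of such lattice points is not directly comparable to $|\Omega_{hR}|$. The paper therefore averages over translates, using the identity $\int_{\Sigma_h}\#\{z\in\Lambda:~t+hz\in\Omega_{hR}\}\,dt=|\Omega_{hR}|$, which produces a good translate $t_h$.

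You instead count whole closed cells meeting $\Omega_{hd}$. Since the closed cells tile $\mathbb{R}^N$, they cover $\Omega_{hd}$. Since each has diameter $hd$, each such cell lies in $\Omega$. Subadditivity then gives $|\Omega_{hd}|\le \#\{z\in\Lambda:~t+h(z+\Sigma)\subset\Omega\}\,h^N|\overline{\Sigma}|$.

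Your argument buys two things. It avoids the Fubini/averaging step altogether. It also gives a slightly stronger conclusion: the lower bound $|\Omega_{hd}|/(h^N|\Sigma|)$ holds for every translate $t\in\Sigma_h$, not just for some well-chosen $t_h$.

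Both versions use the same two assumptions on $\Sigma$:
\begin{itemize}
\item $|\partial\Sigma|=0$: the paper needs it for the integral identity, you need it for $|\overline{\Sigma}|=|\Sigma|$;
\item boundedness of $\Sigma$: the paper through $R$, you through $d$.
\end{itemize}
The paper's averaging is the standard P\'olya-type device, which would still work where only reference points, not whole cells, are controlled. Here your covering argument is simply more direct.
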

\begin{proof}
Since all shifted fundamental domains in the definition \eqref{eq:M} are disjoint subsets of $\Omega$, we have 
\begin{equation}\label{eq:proof:wer1}
M_\Omega(h)h^N|\Sigma|\leq |\Omega|,
\end{equation}
which gives the upper bound in \eqref{eq:Mh-asymp}.  
Let us find the lower bound. 
Denote
$$
\Omega_\rho
=
\{x\in\Omega:~ \text{dist}(x,\partial \Omega)>\rho\}.
$$
Since $\Omega$ is open and bounded, we have $|\Omega_\rho|\to|\Omega|$ as $\rho \to 0$. 
Let us fix $R>0$ such that $\Sigma\subset B_R(0)$. 
If $t+hz\in\Omega_{hR}$, then
$$
t+h(z+\Sigma)\subset B_{hR}(t+hz)\subset\Omega.
$$
Therefore, 
\begin{equation}\label{eq:momegalower1}
M_\Omega(h)
\geq
\sup_{t\in \Sigma_h}
\#\{z\in\Lambda:~ t+hz\in\Omega_{hR}\}.
\end{equation}
Integrating over $t\in \Sigma_h$, we get
\begin{equation}\label{eq:proof:second}
\int_{\Sigma_h}
\#\{z\in\Lambda:~ t+hz\in\Omega_{hR}\}\,dt
=
\sum_{z\in\Lambda}
\int_{\Sigma_h}\chi_{\Omega_{hR}}(t+hz)\,dt       
		=
\int_{\mathbb{R}^N}
\chi_{\Omega_{hR}}(x)\,dx
=
|\Omega_{hR}|,
\end{equation}
where $\chi_{\Omega_{hR}}$ is the characteristic function of $\Omega_{hR}$. 
(In the second equality in \eqref{eq:proof:second}, we used the fact that $|\partial \Sigma_h| = 0$ by the Lipschitzness of $\Sigma$.)
Thus, there exists $t_h\in \Sigma_h$ such that
$$
\#\{z\in\Lambda:~ t_h+hz\in\Omega_{hR}\}
\geq
\frac{|\Omega_{hR}|}{|\Sigma_h|}
=
\frac{|\Omega_{hR}|}{h^N|\Sigma|},
$$
and hence \eqref{eq:momegalower1} yields 
$$
M_\Omega(h)
\geq
\frac{|\Omega_{hR}|}{h^N|\Sigma|}.
$$
Recalling the upper estimate \eqref{eq:proof:wer1}, we let $h \to 0$ and deduce the desired asymptotic \eqref{eq:Mh-asymp}. 
\end{proof}

Let us fix some $m\in\mathbb N$. 
For any $k\in\mathbb N$, put
$$
n_k=\left\lceil \frac{k}{m}\right\rceil,
$$
so that $n_k m \geq k$, and define
\begin{equation}\label{eq:hkm}
h_{k,m}
=
\sup\{h>0:~ M_\Omega(h)\geq n_k\}.
\end{equation}

\begin{lemma}\label{lem:hk}
Let $m \in \mathbb{N}$. Then
\begin{equation}\label{eq:lem:hk}
h_{k,m}
=
\left(
\frac{m |\Omega|}{k|\Sigma|}
\right)^{1/N}
(1+o(1))
\quad\text{as}~k\to +\infty.
\end{equation}
\end{lemma}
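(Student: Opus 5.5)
The idea is to invert the asymptotic of Lemma~\ref{lem:Mh-asymp}, using only that $M_\Omega(h)$ is finite, integer-valued, and asymptotically $|\Omega|/(h^N|\Sigma|)$. Note that $n_k \to +\infty$ and $n_k = \frac{k}{m}(1+o(1))$ as $k\to+\infty$. It therefore suffices to show that
$$
h_{k,m} = \left(\frac{|\Omega|}{n_k|\Sigma|}\right)^{1/N}(1+o(1)).
$$
Along the way we check that the set in \eqref{eq:hkm} is nonempty and bounded. By Lemma~\ref{lem:Mh-asymp}, $M_\Omega(h)\to+\infty$ as $h\to0$, so $M_\Omega(h)\ge n_k$ for small $h$. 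By \eqref{eq:proof:wer1}, $M_\Omega(h)\ge n_k\ge1$ forces $h^N\le |\Omega|/|\Sigma|$. Hence $h_{k,m}\in(0,+\infty)$.

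Fix $\varepsilon\in(0,1)$. Lemma~\ref{lem:Mh-asymp} gives $h_0>0$ such that for all $h\in(0,h_0)$
$$
(1-\varepsilon)\frac{|\Omega|}{h^N|\Sigma|}\le M_\Omega(h)\le \frac{|\Omega|}{h^N|\Sigma|},
$$
where the upper bound holds for every $h$ by \eqref{eq:proof:wer1}.

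\emph{Upper bound on $h_{k,m}$.} Take any $h$ with $M_\Omega(h)\ge n_k$. Then \eqref{eq:proof:wer1} gives $h^N\le |\Omega|/(n_k|\Sigma|)$ directly, with no need for small $h$. Taking the supremum yields $h_{k,m}\le (|\Omega|/(n_k|\Sigma|))^{1/N}$.

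\emph{Lower bound on $h_{k,m}$.} Set $h_*=\bigl((1-\varepsilon)|\Omega|/(n_k|\Sigma|)\bigr)^{1/N}$. Since $n_k\to\infty$, we have $h_*<h_0$ for large $k$. The lower estimate then gives $M_\Omega(h_*)\ge (1-\varepsilon)|\Omega|/(h_*^N|\Sigma|)=n_k$, so $h_{k,m}\ge h_*$. Combining the two bounds,
$$
(1-\varepsilon)^{1/N}\le h_{k,m}\left(\frac{n_k|\Sigma|}{|\Omega|}\right)^{1/N}\le 1
$$
for all large $k$. Letting $\varepsilon\to0$ and substituting $n_k=\frac km(1+o(1))$ gives \eqref{eq:lem:hk}.

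The only delicate point is that $M_\Omega$ need not be monotone in $h$, so the supremum in \eqref{eq:hkm} cannot be treated as an inverse function. The argument avoids this by never using monotonicity. The upper bound comes from the global inequality \eqref{eq:proof:wer1}, valid for all $h$. The lower bound only needs a single admissible value $h_*$, which the asymptotic lower estimate supplies.
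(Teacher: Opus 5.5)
Your proof is correct and follows the paper's argument: reduce to $n_k$, fix $\varepsilon$, and obtain $h_{k,m}\ge \bigl((1-\varepsilon)|\Omega|/(n_k|\Sigma|)\bigr)^{1/N}$ by showing that this value is admissible in \eqref{eq:hkm}, using the lower estimate of Lemma~\ref{lem:Mh-asymp}. The only difference is that you get the upper bound $h_{k,m}\le \bigl(|\Omega|/(n_k|\Sigma|)\bigr)^{1/N}$ in one step from the global inequality \eqref{eq:proof:wer1}, whereas the paper uses the test value $a_k^+$ with factor $1+2\varepsilon$ and splits into the cases $h<h^*$ and $h\ge h^*$; your version is a harmless and slightly sharper simplification.
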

\begin{proof}
Since $n_k=\lceil k/m\rceil$ and $m$ is fixed, we have
\begin{equation}\label{eq:proof:lem:hk}
n_k=\frac{k}{m}(1+o(1))
\quad\text{as }k\to +\infty,
\end{equation}
and hence it is sufficient to prove that 
\begin{equation}\label{eq:hkm-nk}
h_{k,m}
=
\left(
\frac{|\Omega|}{n_k|\Sigma|}
\right)^{1/N}
(1+o(1))
\quad\text{as }k\to +\infty.
\end{equation}
Fix some $\varepsilon \in (0,1)$. 
Then, by Lemma~\ref{lem:Mh-asymp}, there exists $h^* = h^*(\varepsilon)>0$ such that for any $h \in (0,h^*)$ we have 
\begin{equation}\label{eq:lowerupper1}
(1-\varepsilon) \frac{|\Omega|}{h^{N} |\Sigma|} 
\leq
M_\Omega(h)
\leq
(1+\varepsilon) \frac{|\Omega|}{h^{N}|\Sigma|}.
\end{equation}
Define
$$
a_k^- =
\left(
\frac{(1-\varepsilon)|\Omega|}{n_k |\Sigma|}
\right)^{1/N}
\quad \text{and} \quad 
a_k^+ =
\left(
\frac{(1+2\varepsilon)|\Omega|}{n_k |\Sigma|}
\right)^{1/N}.
$$
Clearly, $a_k^\pm \in (0,h^*)$ for any sufficiently large $k$, so that \eqref{eq:lowerupper1} holds for $h = a_k^\pm$. 
On one hand, 
$$
M_\Omega(a_k^-)
\geq
(1-\varepsilon) \frac{|\Omega|}{(a_k^-)^{N} |\Sigma|}
=
n_k,
$$
and hence \eqref{eq:hkm} yields $h_{k,m}\geq a_k^-$. 
On the other hand, for any $h \in [a_k^+, h^*)$ the upper estimate in \eqref{eq:lowerupper1} gives
$$
M_\Omega(h)
\leq
(1+\varepsilon)\frac{|\Omega|}{h^{N} |\Sigma|}
\leq
(1+\varepsilon)\frac{|\Omega|}{(a_k^+)^{N} |\Sigma|}
=
\frac{1+\varepsilon}{1+2\varepsilon} n_k
<
n_k,
$$
while for any $h \geq h^*$ the definition of $M_\Omega(h)$ yields (cf.\ \eqref{eq:proof:wer1})
$$
M_\Omega(h)
\leq
\frac{|\Omega|}{h^{N} |\Sigma|}
\leq
\frac{|\Omega|}{(h^*)^{N} |\Sigma|} < n_k
\quad \text{for any sufficiently large}~ k. 
$$
Thus, there is no $h \geq a_k^+$ admissible for the definition \eqref{eq:hkm} of $h_{k,m}$, implying $h_{k,m} \leq a_k^+$. 
Consequently, for a given $\varepsilon>0$ and any sufficiently large $k$, we get
$$
\left(
\frac{(1-\varepsilon)|\Omega|}{n_k |\Sigma|}
\right)^{1/N}
\leq
h_{k,m}
\leq 
\left(
\frac{(1+2\varepsilon)|\Omega|}{n_k |\Sigma|}
\right)^{1/N}.
$$
Letting $k\to +\infty$ and then $\varepsilon\to0$, we obtain
\eqref{eq:hkm-nk}, which is equivalent to
\eqref{eq:lem:hk} in view of \eqref{eq:proof:lem:hk}. 
\end{proof}

\begin{lemma}\label{lem:lambda-upperbound}
Let $k,m \in \mathbb{N}$. 
Then, for any $\eta \in (0,1)$, 
\begin{equation}\label{eq:lambda-upperbound}
\lambda_k(p;\Omega)
\leq 
((1-\eta)h_{k,m})^{-p}\lambda_m(p;\Sigma).
\end{equation}
\end{lemma}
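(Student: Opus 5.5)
The plan is to build an admissible set for $\lambda_k(p;\Omega)$ directly from eigenfunction-level sets on disjoint scaled copies of $\Sigma$, using the cogenus definition \eqref{eq:Fk} via Lemma~\ref{lem:equil}. Set $h=(1-\eta)h_{k,m}<h_{k,m}$. By the definition \eqref{eq:hkm} of $h_{k,m}$ as a supremum, there is $h'\in(h,h_{k,m}]$ with $M_\Omega(h')\ge n_k$. I will need monotonicity-type control to pass from $h'$ to $h$. The simplest route is to shrink each cell: given $t$ and $n_k$ lattice points $z_1,\dots,z_{n_k}$ with $t+h'(z_j+\Sigma)\subset\Omega$ pairwise disjoint, I place the smaller copies $D_j := c_j + h\Sigma \subset t+h'(z_j+\Sigma)$. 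Here $c_j$ is chosen so that the homothety with ratio $h/h'<1$ maps the cell into itself, e.g.\ centred at a point of $\Sigma$ scaled appropriately; a star-shapedness issue may arise. Since $\Sigma$ need not be star-shaped, a safer choice is to take $\lambda_m(p;\cdot)$ monotonicity. Because $\lambda_m$ is decreasing under inclusion (admissible sets on a subdomain extend by zero), it suffices to have $D_j\supset$ a translate of $h'\Sigma$... but that goes the wrong way. So instead I simply use the $n_k$ cells $h'$-copies themselves and the scaling $\lambda_m(p;h'\Sigma)=h'^{-p}\lambda_m(p;\Sigma)\le h^{-p}\lambda_m(p;\Sigma)$. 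The $\eta$ then just serves to guarantee existence of such $h'$, with strict inequality if the sup is not attained. This is the step I would double-check; it seems to be the only role of $\eta$.

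Next, fix $\varepsilon>0$. On each cell $D_j=t+h'(z_j+\Sigma)$, take by \eqref{eq:Fk} a continuous odd map $g_j:\mathbb{S}^{m-1}\to\mathcal{S}(D_j)$ with $\max E\circ g_j\le \lambda_m(p;D_j)+\varepsilon$. Extend each $g_j$ by zero to $\Omega$. Then define $G:\mathbb{S}^{mn_k-1}\to W_0^{1,p}(\Omega)$ by writing $x=(x_1,\dots,x_{n_k})$ with $x_j\in\mathbb{R}^m$ and setting
\[
G(x)=\sum_{j=1}^{n_k}|x_j|^{2/p}\,g_j\!\left(\frac{x_j}{|x_j|}\right),
\]
with the $j$-th term understood as $0$ when $x_j=0$. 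This is continuous, since $g_j$ is bounded, and odd. Because the supports are disjoint, $\int_\Omega|G(x)|^p=\sum_j|x_j|^2=1$, so $G$ maps into $\mathcal{S}(\Omega)$. Also
\[
E(G(x))=\sum_j|x_j|^2E\bigl(g_j(x_j/|x_j|)\bigr)\le \max_j\lambda_m(p;D_j)+\varepsilon .
\]
Hence $G(\mathbb{S}^{mn_k-1})\in\Sigma^{DR}_{mn_k}$, which gives $\lambda_{mn_k}(p;\Omega)\le h'^{-p}\lambda_m(p;\Sigma)+\varepsilon$ by translation invariance and scaling.

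Finally, since $mn_k\ge k$ and the $\lambda_k$ are nondecreasing in $k$ (restriction to equatorial spheres \eqref{eq:i}), we get $\lambda_k(p;\Omega)\le h'^{-p}\lambda_m(p;\Sigma)+\varepsilon\le ((1-\eta)h_{k,m})^{-p}\lambda_m(p;\Sigma)+\varepsilon$. Letting $\varepsilon\to0$ yields \eqref{eq:lambda-upperbound}. The main obstacle is the careful handling of the supremum in \eqref{eq:hkm}: one must ensure $h_{k,m}$ is finite and positive, which holds by \eqref{eq:proof:wer1} and the lower bound from Lemma~\ref{lem:Mh-asymp}. One must also check that $h_{k,m}$ is approximated from below by admissible $h'$; this is immediate from the definition of the supremum.
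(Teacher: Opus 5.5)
Your argument is correct and shares the paper's skeleton. You choose an admissible $h'\ge(1-\eta)h_{k,m}$ with $M_\Omega(h')\ge n_k$; this is indeed the only role of $\eta$, so the detour about shrinking cells and star-shapedness can simply be deleted. You then place $n_k$ disjoint copies $t+h'(z_j+\Sigma)\subset\Omega$, deduce $\lambda_{mn_k}(p;\Omega)\le h'^{-p}\lambda_m(p;\Sigma)$, and conclude with $mn_k\ge k$ and monotonicity in $k$.

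Where you differ is the key step $\lambda_{mn_k}(p;\Omega)\le\lambda_m(p;Q_i)$. The paper obtains it from the superadditivity of the counting function (Lemma~\ref{lem:superadditivity}, i.e.\ \cite[Lemma~3]{F} combined with the characterization $N^0_\Omega(\lambda)=\gamma_F(M^0_\lambda(\Omega))$ of Lemma~\ref{lem:equality}). You instead prove it directly with the explicit join map $G$ on $\mathbb{S}^{mn_k-1}$, which makes concrete the mechanism behind superadditivity.

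Your version has three advantages. It is self-contained, and it handles all $n_k$ cells at once; the paper's lemma is stated for two subdomains and has to be iterated over a disconnected union of cells. It also works entirely with the Dr\'abek--Robinson sets \eqref{eq:Fk}. Its price is that it relies essentially on near-optimal sets for $\lambda_m(p;\Sigma)$ being odd images of $\mathbb{S}^{m-1}$, which is special to the cogenus (Lemma~\ref{lem:equil}).

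The paper's route only needs superadditivity of the counting function. That is why, as explained in Section~\ref{sec:krasnosel}, the same bound carries over to other indices such as the cohomological index, whose admissible sets need not contain such sphere images.
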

\begin{proof}
It follows from the definition \eqref{eq:hkm} of $h_{k,m}$ that for any $\eta \in (0,1)$ 
there exists
$h>0$ such that
$$
(1-\eta)h_{k,m} \leq h\leq h_{k,m}
\quad \text{and} \quad 
M_\Omega(h)\geq n_k.
$$
Hence, for some $t\in \Sigma_h$, there are distinct points
$z_1,\dots,z_{n_k}\in\Lambda$ such that the domains
$$
Q_i=t + h (z_i+\Sigma),
\quad i=1,\dots,n_k,
$$
are mutually disjoint and contained in $\Omega$.  
By Lemma~\ref{lem:superadditivity}, for any $\lambda>\lambda_m(p;Q_i)$ we have
$N_\Omega^0(\lambda)\ge n_k m$, and hence $\lambda_{n_km}(p;\Omega)<\lambda$.
Sending $\lambda \to \lambda_m(p;Q_i)$ and recalling that $n_k m \geq k$, we arrive at
\begin{equation*}
\lambda_k(p;\Omega) 
\leq 
\lambda_m(p;Q_i)
=
h^{-p}\lambda_m(p;\Sigma)
\leq 
((1-\eta)h_{k,m})^{-p}\lambda_m(p;\Sigma).
\qedhere
\end{equation*}
\end{proof}

Finally, using Lemmas~\ref{lem:hk} and \ref{lem:lambda-upperbound}, we are able to justify Proposition~\ref{prop:weyl}.

\begin{proof}[Proof of Proposition~\ref{prop:weyl}]
For any $m \in \mathbb{N}$ and $\eta \in (0,1)$, Lemmas~\ref{lem:hk} and \ref{lem:lambda-upperbound}  give
$$
\lambda_k(p;\Omega)
\leq 
(1-\eta)^{-p}h_{k,m}^{-p}\lambda_m(p;\Sigma)
= 
 (1-\eta)^{-p}
 \lambda_m(p;\Sigma)
 \left(
 \frac{k |\Sigma|}{m |\Omega|}
 \right)^{p/N}
 (1+o(1))
 \quad \text{as}~ k \to +\infty.   
$$
Dividing by $k^{p/N}$, taking the limit as $k\to +\infty$ (this limit exists by Theorem~\ref{thm:weyl}), and then letting $\eta \to 0$, we arrive at
\begin{equation}\label{eq:prop:Cl}
\lim_{k\to +\infty}
\frac{\lambda_k(p;\Omega)}{k^{p/N}}
\leq
\lambda_m(p;\Sigma)
\left(
\frac{|\Sigma|}{m |\Omega|}
\right)^{p/N}.
\end{equation}
This is equivalent to the desired estimate \eqref{eq:prop:weyl} on $C_{\mathcal{W}}$. 
\end{proof}

\section{Proof of Propositions~\ref{prop:continuity} and \ref{prop:weyl2}}\label{sec:proof}
Throughout this section, we use the expanded notation $C_{\mathcal{W}}(p)$ to emphasize the dependence on $p$.

\begin{proof}[Proof of Proposition~\ref{prop:continuity}] 
We employ the continuity arguments. 
Recall that $\{\lambda_k(2;\Omega)\}$ exhausts the whole spectrum of the Dirichlet Laplacian in $\Omega$, see \cite[Propositions~4.7 and 5.4]{cuesta}. 
Let us take any fundamental domain $\Sigma$ and assume that it is sufficiently regular, e.g., a Lipschitz domain. 
Let us fix any $\varepsilon>0$. 
Then, by Theorem~\ref{thm:weyl} (see also \cite{polya}), there exists $M=M(\varepsilon)>0$ such that 
\begin{equation}\label{eq:proof:prop1:1}
\frac{|\Sigma| \lambda_m^{N/2}(2;\Sigma)}{m}
\leq 
C_{\mathcal{W}}^{N/2}(2) + \varepsilon
\quad \text{for any}~ m \geq M. 
\end{equation}
Let us fix any $m \geq M$.
By Proposition~\ref{prop:continuity-eigenvalues}, the map $p \mapsto \lambda_m(p;\Sigma)$ is continuous, and hence so is $p \mapsto \lambda_m^{N/p}(p;\Sigma)$. 
In particular, for the given $\varepsilon>0$ and $m \geq M$ there exists $\sigma = \sigma(\varepsilon,m)$ such that
\begin{equation}\label{eq:proof:prop1:2}
\frac{|\Sigma|\lambda_m^{N/p}(p;\Sigma)}{m}
\leq
\frac{|\Sigma|\lambda_m^{N/2}(2;\Sigma)}{m}
+\varepsilon
\quad \text{and} \quad 
|B_1|\lambda_1^{N/p}(p;B_1)
\geq
|B_1|\lambda_1^{N/2}(2;B_1)
-\varepsilon 
\end{equation}
for any $p \in (2-\sigma,2+\sigma)$. 

Substituting now \eqref{eq:proof:prop1:1} and \eqref{eq:proof:prop1:2} into \eqref{eq:upperb-general}, we arrive at 
$$
\mathfrak{P}(p;\Omega)
\leq
\frac{|\Sigma|}{m |B_1|} \frac{\lambda_m^{N/p}(p;\Sigma)}{\lambda_1^{N/p}(p;B_1)}
\leq
\frac{C_{\mathcal{W}}^{N/2}(2) + 2\varepsilon}{|B_1|\lambda_1^{N/2}(2;B_1)
-\varepsilon}
\leq
\frac{C_{\mathcal{W}}^{N/2}(2)}{|B_1|\lambda_1^{N/2}(2;B_1)}
+
\widetilde{\varepsilon}
$$
for any $p \in (2-\sigma,2+\sigma)$, where $\widetilde{\varepsilon} = O(\varepsilon)$ as $\varepsilon \to 0$. 
Indeed, assuming $\varepsilon \leq |B_1|\lambda_1^{N/2}(2;B_1)/2$, we get
$$
\mathfrak{P}(p;\Omega)
\leq
\frac{C_{\mathcal{W}}^{N/2}(2)}{|B_1|\lambda_1^{N/2}(2;B_1)}
+
\frac{C_{\mathcal{W}}^{N/2}(2) + 2|B_1|\lambda_1^{N/2}(2;B_1)}{|B_1|^2\lambda_1^{N}(2;B_1)}
\, 2\varepsilon.
$$

Observe that, for $p=2$, we have by \eqref{eq:weylp=2}
\begin{equation}\label{eq:proof:prop1:3}
\frac{C_{\mathcal{W}}^{N/2}(2)}{|B_1|\lambda_1^{N/2}(2;B_1)}
=
\frac{(2\pi)^N}{|B_1|^2 j_{\frac{N}{2}-1,1}^N},
\end{equation}
where the right-hand side is exactly the original Pleijel constant \eqref{eq:P}. 
Moreover, the constant on the right-hand side of \eqref{eq:proof:prop1:3} is strictly less than $1$, see \cite{BM}. 
Thus, additionally using the upper bound $\mathfrak{P}(p;\Omega) \leq 2$ (see \eqref{eq:P1}), the claim of the proposition follows. 
\end{proof}

\medskip
Prior to the proof of Proposition~\ref{prop:weyl2}, we introduce a few notions and establish two auxiliary results. 
For any $x \in \mathbb{S}^{N-1}$, define a unit half-ball
	$$
	B_1^x = \{z \in B_1:~ \langle z, x \rangle > 0\}.
	$$
Let $v_x \in W_0^{1,p}(B_1^x)$ be the first eigenfunction of the $p$-Laplacian in $B_1^x$ such that $v_x > 0$ and $\int_{B_1^x} |v_x|^p \,dz = 1$. We extend it by zero outside of $B_1^x$ so that $v_x \in W_0^{1,p}(B_1)$.
By the simplicity of the first eigenvalue, for any $x,y \in \mathbb{S}^{N-1}$ there exists $R \in SO(N)$ such that $v_x \circ R=v_y$. 
It can be observed (e.g., by \cite[Lemma~2.8]{ABP}) that $v_x - v_{-x}$ is an eigenfunction of the $p$-Laplacian in $B_1$ corresponding to the eigenvalue
$$
\lambda_\ominus(p) := \lambda_1(p;B_1^x). 
$$

The first auxiliary statement is a cogenus counterpart of an ``almost-multiplicity'' result \cite[Theorem~1.1, Eq.~(1.10)]{ABP} obtained for the Krasnoselskii genus eigenvalues.  
\begin{lemma}\label{lem:multiplicity}
We have 
$$
\lambda_2(p;B_1) \leq \dots \leq \lambda_{N+1}(p;B_1) \leq \lambda_\ominus(p). 
$$
\end{lemma}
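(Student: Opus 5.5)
Since the cogenus eigenvalues are nondecreasing in $k$ (because $\Sigma_{k+1}^{DR}$-type sets restrict to $\Sigma_k^{DR}$-type sets via the embedding \eqref{eq:i}), it suffices to prove $\lambda_{N+1}(p;B_1)\le\lambda_\ominus(p)$. We use the Dr\'abek--Robinson characterization \eqref{highereigenvalues2}, which equals $\lambda_{N+1}$ by Lemma~\ref{lem:equil}. The task is to build a continuous odd map $h:\mathbb{S}^N\to\mathcal{S}(B_1)$ with $\max_{\mathbb{S}^N}E\circ h\le\lambda_\ominus(p)$.

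Write points of $\mathbb{S}^N\subset\mathbb{R}^{N+1}$ as $(x,s)$ with $x\in\mathbb{R}^N$, $s\in\mathbb{R}$, and $|x|^2+s^2=1$. The idea is to interpolate between the antisymmetric function $v_{x}-v_{-x}$ at the ``equator'' $s=0$ and a positive function at the poles $(0,\pm1)$. Mimicking \cite{ABP}, we try
\[
u_{(x,s)}=\alpha(s)\,v_{x/|x|}-\beta(s)\,v_{-x/|x|}\quad(x\neq0),\qquad u_{(0,\pm1)}=\pm(v_e+v_{-e}).
\]
The coefficients $\alpha,\beta\ge0$ are chosen so that $(\alpha,\beta)$ moves continuously from $(1,1)$ at $s=0$ to $(1,0)$ at $s=1$ and to $(0,1)$ at $s=-1$, for instance $\alpha(s)=\cos(\pi(1-s)/4\cdot\ldots)$ and so on. With this choice the pole value would be $v_{x/|x|}$, which depends on the direction, so it is not continuous at $x=0$. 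This is the main obstacle. The fix is to use a dimension-count trick: the family $\{v_y\}_{y\in\mathbb{S}^{N-1}}$ is parametrized by $\mathbb{S}^{N-1}$, and the equator gives an odd map $x\mapsto v_x-v_{-x}$ from $\mathbb{S}^{N-1}$. To obtain one more dimension, we instead deform toward the first eigenfunction $\varphi_1$ of $B_1$, or more simply use $v_x+v_{-x}$ (whose Rayleigh quotient is also $\lambda_\ominus$). Concretely, set
\[
u_{(x,s)}=s\,(v_{e_0}+v_{-e_0})+\ldots
\]
This fails again because $v_y+v_{-y}$ depends on $y$.

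The cleaner route is to define $h(x,s)=\frac{u}{\|u\|_p}$ with
\[
u=\big(|x|+s\big)_+^{?}\ldots
\]
More systematically, we can use the disjointness of supports. For $y\in\mathbb{S}^{N-1}$ and $a,b\in\mathbb{R}$, the function $a v_y+b v_{-y}$ has Rayleigh quotient exactly $\lambda_\ominus$ by disjoint supports and $p$-homogeneity: $E=(|a|^p+|b|^p)\lambda_\ominus$ and $\|\cdot\|_p^p=|a|^p+|b|^p$. So any odd continuous map from $\mathbb{S}^N$ into the cone $\{a v_y+b v_{-y}\}\setminus\{0\}$ works. Map $(x,s)\mapsto (s+|x|)v_{\hat x}+(s-|x|)v_{-\hat x}$ with $\hat x=x/|x|$. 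At $x=0$ this equals $s(v_{\hat x}+v_{-\hat x})$, which still depends on $\hat x$. We therefore replace $v_{\hat x}+v_{-\hat x}$ near the poles by a fixed function $w$ with Rayleigh quotient $<\lambda_\ominus$, e.g.\ $w=\varphi_1(p;B_1)$, and interpolate between $s(v_{\hat x}+v_{-\hat x})$ and $s\,w$ via the convex combination $s\big((1-\theta)(v_{\hat x}+v_{-\hat x})+\theta w\big)$. The difficulty is then to bound the Rayleigh quotient along such a convex combination of nonnegative functions by $\lambda_\ominus$. This is exactly what \cite[Lemma~2.8]{ABP}-type arguments (hidden convexity of $E$ along the curve $((1-\theta)f^p+\theta g^p)^{1/p}$ for nonnegative $f,g$) provide. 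Accordingly, we interpolate via $\big((1-\theta)(v_{\hat x}+v_{-\hat x})^p+\theta\varphi_1^p\big)^{1/p}$, whose energy is at most $(1-\theta)\lambda_\ominus+\theta\lambda_1\le\lambda_\ominus$, with $L^p$-norm $1$ after normalization. Gluing this with the cone part (away from the poles, say $|s|<1/2$) and checking continuity and oddness completes the construction, and then $\lambda_{N+1}(p;B_1)\le\lambda_\ominus(p)$.
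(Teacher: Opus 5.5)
Your final construction is essentially the paper's proof. It uses the disjoint-support family $a v_{\hat x}+b v_{-\hat x}$ (constant Rayleigh quotient $\lambda_\ominus(p)$) to join $v_{\hat x}-v_{-\hat x}$ on the equator to $v_{\hat x}+v_{-\hat x}$, then the hidden-convexity path $\bigl((1-\theta)(v_{\hat x}+v_{-\hat x})^p+\theta\varphi_1^p\bigr)^{1/p}$ to the direction-independent $\varphi_1$, and finally the odd extension to $\mathbb{S}^N$ with $\pm\varphi_1$ at the poles. One thing needs fixing in the write-up: your explicit cone map $(s+|x|)v_{\hat x}+(s-|x|)v_{-\hat x}$ becomes a multiple of $v_{\hat x}+v_{-\hat x}$ only at the poles, so as written the gluing at $|s|=1/2$ is discontinuous. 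You must reparametrize it in the latitude so that it reaches $\pm(v_{\hat x}+v_{-\hat x})$ already at $|s|=1/2$, as the paper does with $F(\cdot,s)=H(\cdot,2s)$ on $[0,1/2]$ and $F(\cdot,s)=G(\cdot,2s-1)$ on $[1/2,1]$.
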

\begin{proof}

Define
$$
    \alpha(s)
    =
    \frac{1}{\bigl(1+|2s-1|^p\bigr)^{1/p}}
    \quad \text{and} \quad 
    \beta(s)
    =
    \frac{2s-1}{\bigl(1+|2s-1|^p\bigr)^{1/p}}
    \quad \text{for}~ s \in [0,1],
$$
and set 
$$
    H(x,s)=\alpha(s)v_x+\beta(s)v_{-x}.
$$
We have
\begin{equation}\label{eq:hhh}
    H(x,0)=\frac{v_x - v_{-x}}{2^{1/p}}
    \quad \text{and} \quad 
    H(x,1)=\frac{v_x + v_{-x}}{2^{1/p}} \geq 0 
    \quad \text{in}~ B_1.
\end{equation}
In particular, $H(\cdot,0)$ is odd. 
Moreover, observe that $|\alpha(s)|^p+|\beta(s)|^p=1$, and hence, by disjointness of the
supports of $v_x$ and $v_{-x}$, we obtain
$$
    H(x,s)\in\mathcal{S}(B_1)
    \quad \text{and} \quad 
    E(H(x,s))=\lambda_\ominus(p)
    \quad \text{for any}~ x \in \mathbb{S}^{N-1}, ~s \in [0,1].
$$
It is not hard to see that $H: \mathbb{S}^{N-1} \times [0,1] \to \mathcal{S}(B_1)$ is continuous. 

Let now $\varphi_1 \in \mathcal{S}(B_1)$ be the positive first eigenfunction of the $p$-Laplacian in $B_1$. 
Recalling from \eqref{eq:hhh} that $H(\cdot,1) \geq 0$, for any $x \in \mathbb{S}^{N-1}$ consider 
$$
    G(x,s)
    =
    \bigl((1-s)H(x,1)^p+s\varphi_1^p\bigr)^{1/p},
    \quad s\in[0,1].
$$
Clearly, we have 
$G(x,s)\in\mathcal{S}(B_1)$ and $G: \mathbb{S}^{N-1} \times [0,1] \to \mathcal{S}(B_1)$ is continuous. 
The hidden convexity inequality (see, e.g., \cite[Lemma~1]{diazsaa}) gives
\begin{align}
    E(G(x,s))
    &\leq
    (1-s)E(H(x,1))+sE(\varphi_1)\\
    &=
    (1-s) \lambda_\ominus(p) + s \lambda_1(p;B_1) 
    \leq \lambda_\ominus(p) \quad \text{for any}~ x \in \mathbb{S}^{N-1},~ s\in[0,1].
\end{align}

Consider a map $F: \mathbb{S}^{N-1} \times [0,1] \to \mathcal{S}(B_1)$ defined as $F(\cdot,s) = H(\cdot,2s)$ for $s \in [0,1/2]$, and $F(\cdot,s) = G(\cdot,2s-1)$ for $s \in [1/2,1]$, so that $F$ is continuous and 
\begin{equation}\label{eq:lll1}
    F(x,0)=H(x,0),\quad
    F(x,1)=\varphi_1,\quad
    E(F(x,s))\leq\lambda_\ominus(p)
    \quad 
    \text{for any}~ x \in \mathbb{S}^{N-1},~ s\in[0,1].
\end{equation}
Our aim is to extend $F$ in a continuous odd way to $\mathbb{S}^{N}$. 
For this purpose, we write
$$
    \mathbb{S}^N
    =
    \{(y,t)\in\mathbb R^N\times\mathbb R:\ |y|^2+t^2=1\},
$$
and define an extension map $\Phi:\mathbb{S}^N\to\mathcal{S}(B_1)$ as follows. If
$(y,t)\in\mathbb{S}^N$ and $t\geq0$, set
$$
    \Phi(y,t)
     =
    \begin{cases}
F\left(\frac{y}{|y|},t\right) &\text{for}~ y\neq0,\\[3mm]
\varphi_1 &\text{for}~ y=0 ~(\text{equivalently, } t=1).
    \end{cases}
$$
If $t\leq0$, set
$$
    \Phi(y,t)
     =
    \begin{cases}
-F\left(-\frac{y}{|y|},-t\right) &\text{for}~ y\neq0,\\[3mm]
-\varphi_1 &\text{for}~ y=0  ~(\text{equivalently, } t=-1).
    \end{cases}
$$
This definition is consistent on the equator $t=0$, since then
$|y|=1$ and
$$
   F(-x,0)=H(-x,0) = -H(x,0) = -F(x,0),
$$
and we see that $\Phi$ is odd.
Let us show that $\Phi$ is continuous. 
This is immediate for $t \in (-1,1)$.  
If $t=1$, then, recalling that 
$F(x,1)=\varphi_1$ for every $x\in\mathbb{S}^{N-1}$, the continuity of $\Phi$ follows
from the uniform continuity of $F$ on the compact set
$\mathbb{S}^{N-1}\times[0,1]$. The argument for $t=-1$ is the same.

Since $\Phi:\mathbb{S}^N \to \mathcal{S}(B_1)$ is a
continuous odd map, we get $\Phi(\mathbb{S}^N) \in \Sigma_{N+1}^{DR}$, see \eqref{eq:Fk}.
Recalling \eqref{eq:lll1}, we obtain from the definition \eqref{highereigenvalues2} of $\lambda_{N+1}(p;B_1)$ that 
$$
\lambda_{N+1}(p;B_1) \leq \max_{u \in \Phi(\mathbb{S}^N)} E(u) \leq \lambda_\ominus(p). 
$$
This completes the proof. 
\end{proof}

It is known that 
\begin{equation}\label{eq:cheeger}
\lambda_1(p;\Omega) \to h(\Omega) := \inf_{E \subset \Omega} \frac{P(E)}{|E|}
\quad \text{as}~ p \to 1,
\end{equation}
where $h(\Omega)$ is the Cheeger constant of $\Omega$, see, e.g.,  \cite{Leonardi2015,Parini1} for an overview. 
Here, $P(E)$ denotes the distributional perimeter of $E$ in $\R^N$, which coincides with the $(N-1)$-dimensional Hausdorff measure of $\partial E$ if $E$ is sufficiently smooth, e.g., a Lipschitz domain.  
Using the isoperimetric inequality, it is not hard to observe that
\begin{equation}\label{eq:hb1}
h(B_1)=N \quad \text{for any}~ N \geq 2.
\end{equation}
Moreover, it is known from \cite[Eq.~(4.5)]{Parini1} that 
\begin{equation}\label{eq:hb1+2d}
h(B_1^+) = 3.15429... \quad \text{for}~ N=2,
\end{equation}
where $B_1^+ := B_1 \cap \{x_N>0\}$ is a unit half-ball, $x=(x_1,\dots,x_N)$.  
We would like to estimate $h(B_1^+)$ for $N =3$. 

\begin{lemma}\label{lem:cheeger}
Let $N=3$. Then $h(B_1^+) < 4.2644$. 
\end{lemma}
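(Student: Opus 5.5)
Since $h(B_1^+)$ is an infimum of $P(E)/|E|$ over $E\subset B_1^+$, an upper bound only requires one explicit competitor $E$ with
$$
\frac{P(E)}{|E|}<4.2644 .
$$
The plan is to take a rotationally symmetric set about the $x_3$-axis. A natural first try is the half-ball minus a neighbourhood of the corner circle. Cheeger sets of convex domains are obtained by rolling balls of radius $r=1/h$ inside the domain, so the optimal $E$ should be the union of all balls of radius $r$ contained in $B_1^+$. For this reason I would take
$$
E_r=\bigcup\{B_r(c):~B_r(c)\subset B_1^+\},
$$
with $r$ close to $1/4.26\approx0.2346$.

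\textbf{Describing $E_r$.} The admissible centers $c$ are the points with $c_3\ge r$ and $|c|\le 1-r$. In a meridian half-plane with coordinates $(\rho,z)$, $\rho\ge0$, the set of centers is the region bounded below by the segment $z=r$, $0\le\rho\le a$, and above by the arc of the circle of radius $1-r$, where
$$
a=\sqrt{(1-r)^2-r^2}=\sqrt{1-2r}.
$$
Then $E_r$ is the parallel set of this region at distance $r$. Its boundary consists of three pieces: the flat disk $\{z=0,\ \rho\le a\}$, the spherical cap $\{|x|=1,\ x_3\ge r\,\text{-type angle}\}$ given by the points of the unit sphere with polar angle $\theta\le\theta_0$, where $\sin\theta_0=a/(1-r)$ (equivalently $\cos\theta_0=r/(1-r)$), and a toroidal piece. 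The toroidal piece is swept by the circle of radius $r$ centred at $(a,r)$ over the arc joining the normal directions of the two faces.

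\textbf{Computing $P(E_r)$ and $|E_r|$.} Both follow from elementary integrals, Pappus--Guldin for the torus part, and cap formulas:
\begin{itemize}
\item the disk area is $\pi a^2$;
\item the spherical cap area is $2\pi(1-\cos\theta_0)$;
\item the torus patch area is $2\pi r\int(a+r\sin\phi)\,d\phi$ over the corresponding angular range, which gives a closed form;
\item $|E_r|$ is the volume of the half-ball, $2\pi/3$, minus the volume of the removed wedge near the rim circle. This removed volume equals $2\pi\int(\text{distance to axis})\,dA$ over the planar corner region, again by Pappus.
\end{itemize}
An alternative that avoids the wedge computation is the Steiner-type formula for parallel sets in the meridian plane. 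Either route gives $P(E_r)/|E_r|$ as an explicit function $f(r)$.

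\textbf{Numerical step and main obstacle.} The last step is to evaluate $f(r)$ at a well-chosen $r$, possibly optimising over $r$, and verify $f(r)<4.2644$. The main obstacle is bookkeeping. One must get the angular range of the torus patch right: it runs from the downward normal, where $\phi$ is measured so that $\phi=-\pi/2$ corresponds to the bottom face, to the outward direction of the point $(a,r)/(1-r)$. Signs in the removed corner volume must also be handled carefully. Because $f(r)=1/r$ exactly at the true Cheeger radius, I would use this relation as a consistency check. Concretely, I would solve $f(r)=1/r$ (equivalently $P(E_r)=|E_r|/r$) numerically and confirm that the resulting value lies just below $4.2644$. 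Rigorous verification then only needs evaluating $f$ at one rational $r$, say $r=0.2345$, with enough decimal precision.
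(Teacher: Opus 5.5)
Your competitor is the same as the paper's. The set $\bigcup\{B_r(c):B_r(c)\subset B_1^+\}$ is exactly $\mathrm{Int}(K_r+\overline{B_r})$ with $K_r=\{|x|\le 1-r,\ x_3\ge r\}$. Your disk/cap/torus decomposition and Pappus-type computations lead to the paper's closed forms
$P(E_r)=\pi(1-2r)+2\pi(1-\zeta)+2\pi r(a\sigma+\zeta)$ and $|E_r|=\pi(\tfrac23-2r^2+r^2a\sigma)$, where $a=\sqrt{1-2r}$, $\zeta=r/(1-r)$ and $\sigma=\pi/2+\arcsin\zeta$.

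The gap is in the numerical step, and it is the only delicate one. Your choice $r\approx 1/h$ and the check $f(r)=1/r$ rest on the characterization of Cheeger sets as unions of balls of radius $1/h$ (Kawohl--Lachand-Robert). That is a \emph{planar} result and fails in $\mathbb{R}^3$. There the free boundary of a Cheeger set has sum of principal curvatures equal to $h$. On the torus piece of $E_r$ this sum is $1/r+\sin\theta/(a+r\sin\theta)$, which is not constant; the true free boundary is a piece of a nodoid, as the remark after the lemma notes. So $E_{1/h}$ is not the Cheeger set, and nothing singles out $r=1/h$. Concretely, the formulas above give $f(0.2345)\approx 4.2797$. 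The solution of $f(r)=1/r$ lies near $r\approx 0.2336$, with value $\approx 4.280$. Both exceed $4.2644$, so the verification you describe would not prove the lemma.

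The fix is to genuinely minimize $f$ over $r$, which you mention only as an option. The minimizer is near $r\approx 0.296$, noticeably larger than $1/h$, since the meridian curvature $1/r$ supplies only part of the mean curvature $h$. The minimum value is about $4.2643$. The paper takes $\rho=0.2956$, obtaining $P(E_\rho)=8.08976\ldots$ and $|E_\rho|=1.89708\ldots$, a ratio of about $4.26432$. This beats $4.2644$ by less than $10^{-4}$, and the true value is $h(B_1^+)\approx 4.26395$. The final evaluation therefore has to be carried out with several correct digits.
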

\begin{proof} 
Let us fix $\rho\in(0,1/2)$ and consider the inner parallel body of $B_1^+$:
$$
    K_\rho
     =
    \left\{
x\in\overline{B_1^+}:~
\text{dist}(x,\partial B_1^+)\geq\rho
    \right\}.
$$
Since $\partial B_1^+$ consists of the unit hemisphere and the
flat disk contained in $\{x_3=0\}$, we have
$$
    K_\rho
    =
    \left\{
x\in\mathbb{R}^3:~
|x|\leq 1-\rho,~ x_3\geq\rho
    \right\},
$$
see Figure~\ref{fig:1}. 
Thus, $K_\rho$ is a solid spherical cap of the ball $B_{1-\rho}$.
The flat portion of $\partial K_\rho$ is a disk of radius $a=\sqrt{1-2\rho}$.

\begin{figure}[!ht]
\centering
\begin{tikzpicture}[scale=4, line cap=round, line join=round, >=Stealth]

%------------------------------------------------
% Parameters for the picture
%------------------------------------------------
\def\rhx{0.28}       % any number in (0,1/2) is fine
\pgfmathsetmacro{\Rin}{1-\rhx}       % radius of the inner spherical cap
\pgfmathsetmacro{\a}{sqrt(1-2*\rhx)} % a = sqrt((1-rho)^2-rho^2)
\pgfmathsetmacro{\zetx}{\rhx/\Rin}   % zeta = rho/(1-rho)
\pgfmathsetmacro{\alpha}{asin(\zetx)}% in degrees
\pgfmathsetmacro{\xzet}{sqrt(1-\zetx*\zetx)}

%------------------------------------------------
% Outer half-disk B_1^+
%------------------------------------------------
\draw[thick] (-1,0) arc[start angle=180,end angle=0,radius=1] -- (1,0);
\draw[thick] (-1,0) -- (1,0);

%-----------------------------------------------
% Competitor E_rho
%------------------------------------------------
\fill[gray!10]
  (-\a,0)
  arc[start angle=270,end angle=180-\alpha,radius=\rhx]
  arc[start angle=180-\alpha,end angle=\alpha,radius=1]
  arc[start angle=\alpha,end angle=-90,radius=\rhx]
  -- cycle;

\draw[very thick, black!75!black]
  (-\a,0)
  arc[start angle=270,end angle=180-\alpha,radius=\rhx]
  arc[start angle=180-\alpha,end angle=\alpha,radius=1]
  arc[start angle=\alpha,end angle=-90,radius=\rhx]
  -- cycle;
  
  \draw[dotted, black!75!black]
    (-\a+\rhx,\rhx)
    arc[start angle=0,end angle=360,radius=\rhx]
    -- cycle;

\draw[<-, solid, black!75!black]
  (\a+0.07, \a*\rhx+0.07*\rhx+0.095)
  arc[start angle=\alpha,end angle=-90,radius=0.07];
%  -- cycle;       

%------------------------------------------------
% Inner parallel set K_rho
%------------------------------------------------
\draw[dashed, black!70!black, thick]
  (-\a,\rhx) -- (\a,\rhx);

\draw[dashed, black!70!black, thick]
  ({\Rin*cos(180-\alpha)},{\Rin*sin(180-\alpha)})
  arc[start angle=180-\alpha,end angle=\alpha,radius=\Rin];
 
%------------------------------------------------
% Symmetry axis
%------------------------------------------------
\draw[->] (0,0) -- (0,1.1);
\node at (-0.07,1.08) {$x_3$};
\draw[->] (-1.1,0) -- (1.1,0);
\node at (1.1,-0.05) {$x_1,x_2$};

%------------------------------------------------
% Construction lines for a and zeta
%------------------------------------------------
\draw[densely dotted] (0,\zetx) --  (\xzet,\zetx);
\draw[densely dotted] (\a,0) -- node [left] {$\rho$} (\a,\rhx);

\draw[dotted] (0,0) -- (\a+\rhx,\zetx);

% label a
\draw[thin] (0,0) -- node [below] {$a$} (\a,0);

%------------------------------------------------
% Points used in the construction
%------------------------------------------------
\fill (\a,\rhx) circle (0.012);
\fill (-\a,\rhx) circle (0.012);
\fill (\xzet,\zetx) circle (0.012);
\fill (0,\zetx) circle (0.012);
\fill (-\xzet,\zetx) circle (0.012);

%------------------------------------------------
% Labels
%------------------------------------------------
\node at (-0.75,0.82) {$B_1^+$};
\node at (-0.05,\zetx) {$\zeta$};
\node at (-0.25,0.47) {$K_{\rho}$};
\node at (0.55,0.65) {$E_{\rho}$};

\node at (-0.25,0.07) {$D$};
\node at (0.5,0.97) {$C$};
\node at (-0.82,0.15) {$T$};

\node at (0.76,0.2) {$\sigma$};

\end{tikzpicture}
\caption{A schematic plot for the proof of Lemma~\ref{lem:cheeger}. The gray set is $E_\rho$.}
\label{fig:1}
\end{figure}

Let us consider a subdomain $E_\rho$ of $B_1^+$ defined as
$E_\rho =\text{Int}\bigl(K_\rho+\overline{B_\rho}\bigr)$, where ``$+$'' is the Minkowski sum. 
In particular, we have $h(B_1^+)\leq P(E_\rho)/|E_\rho|$. 
We provide closed form expressions for $P(E_\rho)$ and $|E_\rho|$. 
For convenience, introduce the notation
$$
    \zeta=\frac{\rho}{1-\rho}
    \quad \text{and} \quad 
    \sigma=\frac{\pi}{2}+\arcsin\zeta.
$$
The boundary $\partial E_\rho$ consists of three pieces $D$, $C$, $T$, where $D$ is a disk of radius $a$, $C$ is a cap of the unit sphere, and $T$ is a part of a torus joining them.
It is clear that $|D| = \pi a^2$. 
The cap $C$ lies between the heights $x_3=\zeta$ and $x_3=1$, so $|C| = 2\pi(1-\zeta)$. 
Let us find $|T|$. We parameterize $T$ by
$$
    X(\varphi,\theta)
    =
    \bigl((a+\rho\sin\theta)\cos\varphi,\,
  (a+\rho\sin\theta)\sin\varphi,\,
  \rho-\rho\cos\theta\bigr),
$$
where $0\leq\varphi<2\pi$ and $0\leq\theta\leq\sigma$. 
The corresponding Jacobian determinant is $\rho(a+\rho\sin\theta)$, so we have
$$
|T|=
    2\pi\rho
    \int_0^\sigma (a+\rho\sin\theta)\,d\theta
    =
    2\pi\rho
    \bigl(a\sigma+\rho(1+\zeta)\bigr).
$$
Therefore, 
\begin{align}
    P(E_\rho) 
    &= |D|+|C|+|T|\\
    \label{eq:Prho}
    &=
    \pi (1-2\rho)+2\pi \frac{1-2\rho}{1-\rho}
    +2\pi\rho
    \biggl[
\sqrt{1-2\rho} \left(
    \frac{\pi}{2}+\arcsin \frac{\rho}{1-\rho}
\right)
+\frac{\rho}{1-\rho}
    \biggr].
\end{align}
Let us now calculate $|E_\rho|$. 
For $0\leq x_3\leq\zeta$, the horizontal section of
$E_\rho$ is a disk of radius
$$
    a+\sqrt{\rho^2-(x_3-\rho)^2}
    =
    a+\sqrt{2\rho x_3-x_3^2}.
$$
For $\zeta\leq x_3\leq1$, the boundary of $E_\rho$ agrees with $\partial B_1$, and hence 
the corresponding section has radius $\sqrt{1-x_3^2}$. 
Thus, we get
\begin{align}
    |E_\rho|
    &=
    \pi\int_0^\zeta
    \left(
a+\sqrt{2\rho z-z^2}
    \right)^2\,dz 
    +\pi\int_\zeta^1(1-z^2)\,dz\\
    \label{eq:Erho}
    &=\pi\left[
    \frac{2}{3}
    -2\rho^2
    +\rho^2\sqrt{1-2\rho}
    \left(
    \frac{\pi}{2}
    +\arcsin\frac{\rho}{1-\rho}
    \right)
    \right].
\end{align}
Substituting, for instance, $\rho=0.2956$, 
into \eqref{eq:Prho} and \eqref{eq:Erho}, we obtain
$$
    P(E_\rho)= 8.08976...
    \quad \text{and} \quad 
    |E_\rho|= 1.89708...
$$
Consequently, $h(B_1^+)\leq P(E_{0.2956})/|E_{0.2956}| < 4.2644$. 
\end{proof}

\begin{remark}
Since $B_1^+$ is a domain of revolution around the $x_N$-axis, one can use the method of \cite{BobParRot} and derive a semi-explicit expression for $h(B_1^+)$ for any $N \geq 3$. 
In particular, the free boundary of the minimizing set for $h(B_1^+)$ is made of a part of a \textit{nodoid} rather than a torus.
A corresponding numerical investigation in the case $N=3$ indicates that $h(B_1^+) = 4.26395...$, which is rather close to the upper bound $4.2644$ from Lemma~\ref{lem:cheeger}. 
We decided to avoid presenting the expression for $h(B_1^+)$ following \cite{BobParRot}, since the upper bound $4.2644$ is simpler to derive and its value is sufficient for our purposes. 
\end{remark}

Using Lemmas~\ref{lem:multiplicity} and \ref{lem:cheeger}, we justify Proposition~\ref{prop:weyl2}.
\begin{proof}[Proof of Proposition~\ref{prop:weyl2}]
Assume that the fundamental domain $\Sigma$ has inradius $1$. 
By the domain monotonicity, we have $\lambda_m(p;\Sigma) \leq \lambda_m(p;B_1)$ for any $m \in \mathbb{N}$. 
Hence, taking $m=N+1$ in Theorem~\ref{theorem:pleijel-general} and using Lemma~\ref{lem:multiplicity}, we get
\begin{equation}\label{eq:proof:prop2:1}
\mathfrak{P}(p;\Omega) 
\leq 
\frac{|\Sigma|}{(N+1) |B_1|} \frac{\lambda_{N+1}^{N/p}(p;B_1)}{\lambda_1^{N/p}(p;B_1)}
\leq
\frac{|\Sigma|}{(N+1) |B_1|} \frac{\lambda_\ominus^{N/p}(p)}{\lambda_1^{N/p}(p;B_1)}. 
\end{equation}
Therefore, using \eqref{eq:cheeger} and the definition of $\lambda_\ominus(p)$, we arrive at
$$
\limsup_{p \to 1} \mathfrak{P}(p;\Omega) 
\leq
\frac{|\Sigma|}{(N+1) |B_1|} \frac{h^{N}(B_1^+)}{h^{N}(B_1)}. 
$$
Let $N=2$. We take $\Sigma$ to be a regular hexagon of inradius $1$, so that $|\Sigma|/|B_1| = 2\sqrt{3}/\pi$. 
Using \eqref{eq:hb1} and \eqref{eq:hb1+2d}, we arrive at
$$
\limsup_{p \to 1} \mathfrak{P}(p;\Omega) \leq
\frac{2\sqrt{3}}{3\pi} \left(\frac{3.15429...}{2}\right)^2 
		=
		0.91424...
$$
Let $N=3$. 
We take $\Lambda$ to be the face-centered cubic lattice (see, e.g., \cite{conwaysloane}), so that $|\Sigma|/|B_1| = 3\sqrt{2}/\pi$. 
(For instance, one can let $\Sigma$ be a rhombic dodecahedron.) 
Using \eqref{eq:hb1} and Lemma~\ref{lem:cheeger}, we arrive at
$$
\limsup_{p \to 1} \mathfrak{P}(p;\Omega) \leq
\frac{3\sqrt{2}}{4\pi} \left(\frac{4.2644}{3}\right)^3 = 0.96969...
$$
The proof is complete.
\end{proof}

\section{Counterparts for other indices}\label{sec:krasnosel}
	
The Courant-type nodal domain bounds \eqref{eq:DRcourant} and \eqref{eq:courant} are established in \cite[Section~3]{DR} for the cogenus eigenfunctions. 
The arguments of \cite{DR} hold verbatim for eigenfunctions corresponding to $\lambda_k^{(i)}(p;\Omega)$, provided the index $i$ satisfies the assumptions \ref{I1}, \ref{I2}, \ref{I4} of Section~\ref{subsec:cogenus}, since only compact symmetric subsets of $\mathcal{S}(\Omega)$ homeomorphic to $\mathbb{S}^{m-1}$ (for certain $m \in \mathbb{N}$) are involved in the arguments. 
In particular, the eigenfunctions corresponding to the Krasnoselskii genus eigenvalues $\lambda_k^G(p;\Omega)$ and the cohomological index eigenvalues $\lambda_k^P(p;\Omega)$ obey the nodal domain bound $\nu(\varphi_k) \leq 2k-2$ for any $k \geq 2$. 

Assume, in addition to \ref{I1}, \ref{I2}, \ref{I4}, that the index $i$ is such that the counting function for the sequence $\{\lambda_k^{(i)}(p;\Omega)\}$ satisfies the superadditivity relation as in Lemma~\ref{lem:superadditivity}.
Then the following upper bound can be established by the same arguments as in Section~\ref{sec:Cl}:
\begin{equation}\label{eq:prop:Clgen}
\limsup_{k\to +\infty}
\frac{\lambda_k^{(i)}(p;\Omega)}{k^{p/N}}
\leq
\lambda_m^{(i)}(p;\Sigma)
\left(
\frac{|\Sigma|}{m |\Omega|}
\right)^{p/N}
\quad \text{for any}~ m \in \mathbb{N}.
\end{equation}
Moreover, if $\{\lambda_k^{(i)}(p;\Omega)\}$ also obeys a similar lower bound
\begin{equation}\label{eq:weyl1xx}
C_{\mathcal{W},\text{low}} |\Omega|^{-p/N} k^{p/N}
\leq
\lambda_k^{(i)}(p;\Omega) 
\quad \text{as}~ k \to +\infty,
\end{equation}
where $C_{\mathcal{W},\text{low}} = C_{\mathcal{W},\text{low}}(p,N)>0$, 
then the Weyl law holds for $\{\lambda_k^{(i)}(p;\Omega)\}$ by the same arguments as in  \cite{mazur}, see Section~\ref{sec:weyl} and Remark~\ref{rem:weyl}. 
As a consequence, Theorem~\ref{theorem:pleijel-general} remains valid under these two additional assumptions. 

In fact, if $m=1$, then \eqref{eq:prop:Clgen} holds without requiring the superadditivity assumption, since only compact symmetric subsets of $\mathcal{S}(\Omega)$ homeomorphic to $\mathbb{S}^{k-1}$ are involved in the arguments. 
In particular, Corollary~\ref{cor:m=1} is valid for any $i$ satisfying \ref{I1}, \ref{I2}, \ref{I4}. 

Recall from Remark~\ref{rem:weyl} that the Weyl law for the Krasnoselskii genus eigenvalues remains an open problem. However, noting that $\lambda_k^{G}(p;\Omega) \leq \lambda_k^{F}(p;\Omega)$ for any $k \in \mathbb{N}$ (see \cite[p.~195]{DR}), we get the upper bound
\begin{equation}\label{eq:prop:Clgen2}
\limsup_{k\to +\infty}
\frac{\lambda_k^{G}(p;\Omega)}{k^{p/N}}
\leq
\lambda_m^{F}(p;\Sigma)
\left(
\frac{|\Sigma|}{m |\Omega|}
\right)^{p/N}
\quad \text{for any}~ m \in \mathbb{N}.
\end{equation}
This implies that Propositions~\ref{prop:continuity} and \ref{prop:weyl2} remain valid also for the Krasnoselskii genus eigenfunctions. 
More generally, Propositions~\ref{prop:continuity} and \ref{prop:weyl2} hold for $\{\lambda_k^{(i)}(p;\Omega)\}$, provided $\lambda_k^{G}(p;\Omega) \leq \lambda_k^{(i)}(p;\Omega) \leq \lambda_k^{F}(p;\Omega)$ for any $k \in \mathbb{N}$. 
An example is the cohomological index, see \eqref{eq:lll0}.


\addcontentsline{toc}{section}{\refname}
\small
	
\begin{thebibliography}{11}

\bibitem{ananetsouli}
Anane, A., \& Tsouli, N. (1996). On the second eigenvalue of the $p$-Laplacian. In ``Nonlinear
Partial Differential Equations (From a conference in F\'es, Morocco, 1994)'' (A. Benkirane
and J.-P. Gossez, Ed.), Pitman Research Notes in Math., Vol. 343, pp. 1-9, Longman,
Harlow, 1996.

\bibitem{ABP}
Audoux, B., Bobkov, V., \& Parini, E. (2018). On multiplicity of eigenvalues and symmetry of eigenfunctions of the $p$-Laplacian. Topological Methods in Nonlinear Analysis, 51(2), 565-582.
\doi{10.12775/TMNA.2017.055}

\bibitem{APA} Garc\'ia Azorero, J. P., \& Peral Alonso, I. (1987). Existence and nonuniqueness for the $p$-Laplacian. Communications in Partial Differential Equations, 12(12), 1389-1430.
\doi{10.1080/03605308708820534}

\bibitem{BM}
	B\'erard, P., \& Meyer, D. (1982). In\'egalit\'es isop\'erim\'etriques et applications. Annales scientifiques de l'\'Ecole Normale Sup\'erieure, 15(3), 513-541.
	\url{https://eudml.org/doc/urn:eudml:doc:82104}

\bibitem{Bob2}
Bobkov, V. (2018). On exact Pleijel's constant for some domains. Documenta Mathematica, 23, 799-813.
\doi{10.4171/DM/634}

\bibitem{BobParRot}
Bobkov, V., \& Parini, E. (2021). On the Cheeger problem for rotationally invariant domains. Manuscripta Mathematica, 166(3), 503-522.
\doi{10.1007/s00229-020-01260-9}

\bibitem{coffman-classif}
Coffman, C. V. (1991). Classification of balanced sets and critical points of even functions on spheres. Transactions of the American Mathematical Society, 326(2), 727-747.
\doi{10.1090/S0002-9947-1991-1007802-0}


\bibitem{conwaysloane}
Conway, J. H., \& Sloane, N. J. A. (1999). Sphere packings, lattices and groups. 
Springer.
\doi{10.1007/978-1-4757-6568-7}


\bibitem{cuesta}
Cuesta, M. (2000). On the Fu\v{c}\'ik spectrum of the Laplacian and $p$-Laplacian.
In Proceedings of the ``2000 Seminar in Differential Equations'', Kvilda (Czech Republic), 2000.
\url{http://www-lmpa.univ-littoral.fr/~cuesta/articles/kavilda0.pdf}

\bibitem{CuestaFucik}
		Cuesta, M., De Figueiredo, D., \& Gossez, J. P. (1999). The beginning of the Fu\v{c}ik spectrum for the $p$-Laplacian. Journal of Differential Equations, 159(1), 212-238.
		\doi{10.1006/jdeq.1999.3645}

\bibitem{degiovannimarzocchi2}
Degiovanni, M., \& Marzocchi, M. (2014). Limit of minimax values under $\Gamma$-convergence. Electronic Journal of Differential Equations, 2014(266), 1-19. 
\url{https://ejde.math.txstate.edu/Volumes/2014/266/degiovanni.pdf}

\bibitem{degiovannimarzocchi}
Degiovanni, M., \& Marzocchi, M. (2015). 
On the dependence on $p$ of the variational eigenvalues of the $p$-Laplace operator. Potential Analysis, 43(4), 593-609.
\doi{10.1007/s11118-015-9487-0}


\bibitem{diazsaa}
			D\'iaz, J. I., \& Sa\'a, J. E. (1987). Existence et unicit\'e de solutions positives pour certaines \'equations elliptiques quasilin\'eaires. 
			Comptes Rendus de l'Acad\'emie des Sciences. S\'erie I, Math\'ematique, 305(12), 521-524.
			\url{https://gallica.bnf.fr/ark:/12148/bpt6k5494240w/f31.item}

\bibitem{DR0}
		Dr\'abek, P., \& Robinson, S. B. (1999). Resonance problems for the $p$-Laplacian. Journal of Functional Analysis, 169(1), 189-200.
		\doi{10.1006/jfan.1999.3501}

\bibitem{DR} Dr\'abek, P., \& Robinson, S. B. (2002). On the generalization of the Courant nodal domain theorem. Journal of Differential Equations, 181(1), 58-71.
\doi{10.1006/jdeq.2001.4070}

\bibitem{dugunji}
Dugundji, J. (1951). An extension of Tietze's theorem. Pacific Journal of Mathematics, 1(3), 353-367.
\doi{10.2140/pjm.1951.1.353}

\bibitem{fadell}
Fadell, E. R., \& Rabinowitz, P. H. (1977). Bifurcation for odd potential operators and an alternative topological index. Journal of Functional Analysis, 26(1), 48-67.
\doi{10.1016/0022-1236(77)90015-5}


\bibitem{F} Friedlander, L. (1989). Asymptotic behaviour of the eigenvalues of the $p$-Laplacian. Communications in Partial Differential Equations, 14(8-9), 1059-1069.
\doi{10.1080/03605308908820643}

\bibitem{Helffer}
	Helffer, B., \& Hoffmann-Ostenhof, T. (2015). A review on large $k$ minimal spectral $k$-partitions and Pleijel's Theorem. 
	Spectral theory and partial differential equations, 39-57, 
	Contemporary Mathematics, 640, AMS, 2015. 
	\doi{10.1090/conm/640/12841}

\bibitem{JLM} 
Juutinen, P., Lindqvist, P., \& Manfredi, J. J. (1999). The $\infty$-eigenvalue problem. Archive for Rational Mechanics and Analysis, 148(2), 89-105.
\doi{10.1007/s002050050157}

\bibitem{kawohl-lind}
		Kawohl, B., \& Lindqvist, P. (2006). Positive eigenfunctions for the $p$-Laplace operator revisited. Analysis -- International Mathematical Journal of Analysis and its Application, 26(4), 545-550.
		\doi{10.1524/anly.2006.26.4.545}

\bibitem{Leonardi2015}
Leonardi, G. P. (2015). An overview on the Cheeger problem. 
New Trends in Shape Optimization, International Series of Numerical Mathematics,
vol.~166, Birkh\"auser, 2015, 117--139.
\doi{10.1007/978-3-319-17563-8\_6}

\bibitem{mazur}
Mazurowski, L. (2025). A Weyl law for the $p$-Laplacian. Journal of Functional Analysis, 288(3), 110734.
\doi{10.1016/j.jfa.2024.110734}

\bibitem{Parini1}
Parini, E. (2010). The second eigenvalue of the $p$‐Laplacian as $p$ goes to $1$. International Journal of Differential Equations, 2010(1), 984671.
\doi{10.1155/2010/984671}

\bibitem{perera}
Perera, K. (2003). Nontrivial critical groups in $p$-Laplacian problems via the Yang index. 
Topological Methods in Nonlinear Analysis, 21(2), 301-310.
\url{https://apcz.umk.pl/TMNA/article/view/TMNA.2003.018}

\bibitem{perera-book}
Perera, K., Agarwal, R. P., \& O'Regan, D. (2010). Morse theoretic aspects of $p$-Laplacian type operators. American Mathematical Soc.
\doi{10.1090/surv/161}

\bibitem{pleijel} Pleijel, A. (1956). Remarks on Courant's nodal line theorem. Communications on Pure and Applied Mathematics, 9(3), 543-550.
\doi{10.1002/cpa.3160090324}

\bibitem{polya}
P\'olya, G. (1961). On the eigenvalues of vibrating membranes. 
Proceedings of the London Mathematical Society, 3(11), 419-433.
\doi{10.1112/plms/s3-11.1.419}


\bibitem{rabinowitz}
Rabinowitz, P. H. (1986). Minimax methods in critical point theory with applications to differential equations. American Mathematical Soc.
\doi{10.1090/cbms/065}

\bibitem{tolksdorf}
Tolksdorf, P. (1984). Regularity for a more general class of quasilinear elliptic equations. Journal of Differential equations, 51(1), 126-150. 
\doi{10.1016/0022-0396(84)90105-0}

\end{thebibliography}
\end{document}